\documentclass[12pt]{article}
\usepackage[top=2.25cm, bottom=2.25cm, left=2.2cm, right=2.2cm]{geometry}
\usepackage{amsmath,amssymb,amsthm,mathtools}
\usepackage{microtype}
\usepackage{enumitem}
\usepackage{booktabs}
\usepackage[hidelinks]{hyperref}
\usepackage[nameinlink,capitalize]{cleveref}
\usepackage{xcolor}

\newtheorem{theorem}{Theorem}[section]
\newtheorem{lemma}[theorem]{Lemma}

\newtheorem{corollary}[theorem]{Corollary}
\newtheorem{claim}[theorem]{Claim}

\newtheorem{conjecture}[theorem]{Conjecture}
\theoremstyle{remark}

\newcommand{\R}{\mathbb R}

\newcommand{\cS}{\mathcal S}
\newcommand{\cK}{\mathcal K}

\newcommand{\eps}{\varepsilon}

\begin{document}
\title{The \(O(m^{2/3})\) error term for judicious partitions of 3-uniform hypergraphs}
\author{Siwei Lin, Qinghou Zeng\footnote{Research partially supported by National Key R\&D Program of China (Grant No. 2023YFA1010202) and National Natural Science Foundation of China (Grant No. 12371342). Email: zengqh@fzu.edu.cn (Corresponding author)}\\
{\small Center for Discrete Mathematics, Fuzhou University, Fujian, 350003, China}}
\date{}
\maketitle

\begin{abstract}
Let $k\ge2$ be a fixed integer. Bollob\'as and Scott proved that every
$3$-uniform hypergraph with $m$ edges admits a partition of its vertex set
into $k$ parts such that each part spans at most
$m/k^3+O_k(m^{6/7})$ 
edges. Scott later suggested that the error term should be
$O_k(m^{2/3})$. In this paper, we show that every
$3$-uniform hypergraph with $m$ edges admits a partition into $k$ parts
such that each part spans at most
$m/k^3+O_k(m^{2/3})$ 
edges, thereby establishing the proposed error term.
\end{abstract}

\medskip
\noindent\textbf{Keywords:} judicious partition, $3$-uniform hypergraph, codegree

\smallskip
\noindent\textbf{2020 Mathematics Subject Classification:} 05C65, 05C70

\section{Introduction}
Vertex-partition problems in graphs and hypergraphs ask for a decomposition
of the vertex set into disjoint classes satisfying prescribed combinatorial
conditions.  A classical example is the \emph{Max-Cut problem}: given a graph
$G$, find a bipartition $V(G)=V_1\cup V_2$ maximizing
$e(V_1,V_2)$.  The problem is NP-hard even for triangle-free cubic graphs
\cite{Yannakakis78}, and has been studied extensively in both combinatorics
and theoretical computer science.

It is easy to prove that every graph $G$ with $m$ edges has
a bipartition $V(G)=V_1\cup V_2$ satisfying $e(V_1,V_2)\ge m/2$.
Edwards \cite{Edwards73,Edwards75} proved the essentially best possible
improvement $e(V_1,V_2)\ge m/2+h(m)/4$, where $h(m)=\sqrt{2m+1/4}-1/2$.
Bollob\'as and Scott \cite{BS2002} extended this result to $k$-partitions.  They proved that every graph with $m$ edges admits a
partition $V(G)=V_1\cup\cdots\cup V_k$ such that $e(V_1,\ldots,V_k) \ge \frac{k-1}{k}(m+h(m)/2)-O(k)$.

Maximizing $e(V_1,\ldots,V_k)$ is equivalent to minimizing the total number
of edges lying in some $V_i$.  This does not, however,
control how the internal edges are distributed among the individual
classes.  So naturally we ask what happens if we want to minimize $\max_{i\in[k]} e(V_i)$.
For $k=2$, this is the \emph{bottleneck bipartition problem}, which was posed by
Entringer (see \cite{SS94}).  Porter \cite{Porter92} proved that every graph
$G$ with $m$ edges has a bipartition $V(G)=V_1\cup V_2$ such that $ e(V_i)\le m/4+O(\sqrt m)$ for $i=1,2$. 
Bollob\'as and Scott \cite{BS99} sharpened this bound to $e(V_i)\le m/4+h(m)/8$ for $i=1,2$.

More generally, in a uniformly random partition of the vertex set of a
graph into $k$ classes, a fixed edge lies inside a specified class with
probability $1/k^2$.  Thus $m/k^2$ is the natural leading term for the
number of edges in each class.  This leading term is asymptotically best
possible, as is seen by considering complete graphs.  Bollob\'as and Scott
\cite{BS99} proved that every graph with $m$ edges admits a partition
$V(G)=V_1\cup\cdots\cup V_k$ such that for every $i\in[k]$, $e(V_i) \le m/k^2+\frac{k-1}{2k^2}h(m)$.
The related problems have attracted many researchers with a wealth
of results \cite{ABKS03,BS93,BS99,BS00,BS2002,BS10,FHZ14,HWY16,HZ17,HX27,JMYY19,LLS16}.

We consider the analogous problem for uniform hypergraphs.  Let $H$ be an
$r$-uniform hypergraph with $m$ edges and write
\begin{align*}
    g_k(H)
    =
    \min_{V(H)=V_1\cup\ldots\cup V_k}
    \max_i\{e_H(V_i)\}\,.
\end{align*}
In a random vertex partition of $H$ into $k$ sets we expect to have $m/k^r$
edges in each set. By considering large complete $r$-uniform hypergraphs, it is easy
to see that we cannot demand that every set contains less than $m/k^r$ edges.  For graphs, that is, for $r=2$, the results above show that this bound can be achieved to a lower-order error term.
For $r\ge3$, Bollob\'as and Scott \cite{BS97} posed the following
conjecture.

\begin{conjecture}[Bollob\'as and Scott \cite{BS97}]
Let $r\ge3$ and $k\ge2$ be fixed integers.  Then every $r$-uniform
hypergraph $H$ with $m$ edges satisfies
\[
    g_k(H)\le \frac{m}{k^r}+o_k(m).
\]
\end{conjecture}

In the same paper, Bollob\'as and Scott verified the conjecture for
$3$-uniform hypergraphs, proving an error term of order $m^{6/7}$.
\begin{theorem}[Bollob\'as and Scott \cite{BS97}]
    Let $k\ge2$ be a fixed integer.  Then every 3-uniform hypergraph $H$ with $m$ edges satisfies
    \[
        g_k(H)\le \frac{m}{k^3}+O_k(m^{6/7}).
    \]
\end{theorem}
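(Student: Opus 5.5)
We prove the bound $g_k(H)\le m/k^3+O_k(m^{6/7})$ by splitting the vertex set according to degree, placing the high-degree vertices deterministically and then completing the partition at random. Let $\Delta$ be a threshold of the form $m^{\alpha}$, with $\alpha$ fixed at the end. Call a vertex \emph{big} if its degree exceeds $\Delta$, and let $B$ be the set of big vertices, so that $|B|\le 3m/\Delta$. Call a pair of vertices \emph{heavy} if its codegree exceeds a second threshold $D$.

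The first step is to partition $B$ into $k$ classes that are judicious with respect to edges meeting $B$ in at least two vertices. Each such edge is determined by a pair inside $B$, and there are at most $|B|^2\le 9m^2/\Delta^2$ such pairs. For edges with all three vertices in $B$, we apply a random partition of $B$ followed by an application of the graph result of Bollob\'as and Scott \cite{BS99} to the weighted ``link'' structure. The error is then controlled by the square root of the sum of squared weights, which is at most $m\cdot\max(\text{codegree})$. Heavy pairs are handled separately: a vertex pair of codegree above $D$ forces at most $3m/D$ such pairs. We place them so that edges through heavy pairs split evenly, using an Edwards-type greedy argument whose error is $O(D)$ per class.

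The second step extends the partition to the small vertices $V\setminus B$ uniformly at random, independently, and controls $e(V_i)$ by concentration. Let $X_i$ count the edges inside $V_i$. Its expectation is at most the target $m/k^3$ plus the deterministic contribution fixed in the first step. Changing the class of a single small vertex $v$ changes $X_i$ by at most $d(v)\le\Delta$. We therefore apply a martingale (Azuma/McDiarmid) inequality, with variance proxy $\sum_v d(v)^2\le \Delta\sum_v d(v)\le 3m\Delta$. This gives deviations $O(\sqrt{m\Delta})$ with positive probability simultaneously for all $i$. The edges with exactly one big vertex $b$ are handled the same way: they become edges of the link graph of $b$ restricted to small vertices, and their count inside a class is again a Lipschitz function of the small-vertex assignment with the same bound. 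Because the expectation is only correct in aggregate, we also need the first step to balance, for each class, the total degree of big vertices placed there. This is a standard vector-balancing step: greedily assigning big vertices in decreasing order of degree gives error $O(\Delta)$ per class only if degrees are at most $\Delta$, which they are not. Instead we use that the sum over big vertices of their contributions is linear in the random small-vertex assignment, and balance by a rounding argument with error at most the maximum codegree times $|B|$.

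Finally, collecting the errors gives terms of order
\[
\sqrt{m\Delta},\qquad \frac{m^2}{\Delta^2}\cdot\frac{1}{?},\qquad \frac{m}{D},\qquad D\cdot\frac{m}{\Delta},
\]
and balancing these yields $\Delta\asymp m^{5/7}$ and $D\asymp m^{3/7}$ (roughly), so every term is $O(m^{6/7})$. I expect the main obstacle to be the interaction between big vertices and the random part: an edge with one big vertex and two small vertices has its expectation depend on where the big vertex went. Making all $k$ classes simultaneously close to $m/k^3$ therefore requires the first-step partition of $B$ to be judicious for a weighted combination of degrees and link-graph sizes. I would try to handle this with a multi-objective random-plus-correction argument, using the codegree bound $D$ to keep individual weights small.
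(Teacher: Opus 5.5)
There is a genuine gap, at exactly the step you flag as the main obstacle. Let $\alpha_b$ be the number of edges meeting $B$ only in $b$, and $\beta_{ab}$ the number meeting $B$ exactly in $\{a,b\}$. If the small vertices are placed uniformly at random, the expected number of edges in $V_i$ is
\[
\frac{e(V\setminus B)}{k^3}+\frac{\alpha(B_i)}{k^2}+\frac{\beta(B_i)}{k}+e(B_i).
\]
A single $\alpha_b$ can be of order $m$, and then no partition of $B$ brings this down to $m/k^3+o(m)$.

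Concretely, let $H$ consist of the edges $v_0xy$ with $xy\in E(G)$, where $G$ is a graph with $m/2$ edges and maximum degree $O(m^{1/3})$, together with a vertex-disjoint complete $3$-graph with about $m/2$ edges. Every vertex other than $v_0$ has degree $O(m^{2/3})<\Delta$, and every codegree is $O(m^{1/3})<D$. So $B=\{v_0\}$ and there are no heavy pairs. Yet the class $i$ containing $v_0$ has
\[
\mathbb E X_i=\frac{m}{2k^2}+\frac{m}{2k^3}=\frac m{k^3}+\frac{(k-1)m}{2k^3},
\]
and by your own concentration estimate $X_i$ is within $o(m)$ of this. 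So the ``rounding argument with error at most the maximum codegree times $|B|$'' (here $O(m^{1/3})$) cannot exist. The threshold $D$ does not help either, because $\alpha_b$ is a degree-type quantity, not a codegree. Meanwhile $g_k(H)$ is small here: put $v_0$ and $V(G)$ in different classes and split the complete $3$-graph evenly. So the failure lies in the uniform extension, not in the statement.

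The missing idea is that the class probabilities $p=(p_1,\dots,p_k)$ for the small vertices must be chosen jointly with the partition of $B$; in the example, the class of $v_0$ gets a smaller share. You then need to show that some partition $B=B_1\cup\dots\cup B_k$ and some $p\in S_k$ satisfy
\[
p_i^3e(V\setminus B)+p_i^2\alpha(B_i)+p_i\beta(B_i)+e(B_i)\le\frac m{k^3}+\text{error}\qquad\text{for all } i
\]
simultaneously. The paper does this in three steps:
\begin{enumerate}
\item It starts from the uniform fractional coloring, where equality holds.
\item It rounds all but $k(k-1)$ bad vertices with no first-order loss (Lemma~\ref{lem:graph-preserving}), so the large weights $\alpha_b$ and $\beta_{ab}$ never enter the error.
\item It rounds the last few vertices while re-choosing $p$ (Lemma~\ref{lem:lower-rank-completion}, via convexity of the capacity function in Lemma~\ref{lem:capacity-convex}).
\end{enumerate}
Your McDiarmid step for the small vertices would work just as well with a non-uniform $p$; the paper derandomizes that step in Corollary~\ref{cor:residual-rounding}.

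With this in place, no heavy-pair analysis is needed for the present statement. Theorem~\ref{thm:low codegree case} with $A=\{v:d(v)>m^{5/7}\}$ gives error $O_k\bigl(|A|^2+\sqrt{m\Delta_{2,H}(U)}+\Delta_H(U)\bigr)=O_k(m^{6/7})$, using only $\Delta_{2,H}(U)\le\Delta_H(U)\le m^{5/7}$. The statement is in any case implied by Theorem~\ref{thm:main}.

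As written, your heavy-pair step is also unsupported. A pair of codegree above $D$ may lie in far more than $D$ edges, so no ``even split'' has error $O(D)$. The workable move is to put the two endpoints in different classes, as in Lemma~\ref{lem:matching-absorption}. Finally, your error budget contains an undetermined term, so the exponent $6/7$ is not actually derived.
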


Scott \cite{Scott05} suggested that, in the $3$-uniform case, the error
term may be improved to order $m^{2/3}$.
\begin{conjecture}[Scott \cite{Scott05}]
    Let $k\ge2$ be a fixed integer.  Then every 3-uniform hypergraph $H$ with $m$ edges satisfies
    \[
        g_k(H)\le \frac{m}{k^3}+O_k(m^{2/3}).
    \]
\end{conjecture}
Indeed, let \(H\sim G^{(3)}(n,p)\), where \(p=1/2\). Standard concentration estimates show that with high
probability $ g_k(H)   \le  p\binom{\lceil n/k\rceil}{3}   +  O_{k}\!\left(
        n^{3/2}\sqrt{p\log n}    \right).$
Note that, with high probability, \(m=(1+o(1))p\binom {n}{3}\). This gives, with high probability, $g_k(H)\le m/k^3+O_k(\sqrt{m\log m})$, which means that the error term $O(m^{6/7})$ may be much improved. On the other hand, the maximum vertex degree of \(H\) is, with
high probability, $\Theta(m^{2/3})$.  Note that an individual high-degree vertex may create an error comparable to its degree.  Thus,
this suggests \(m^{2/3}\) as a target.

In this paper, we confirm Scott's conjecture.

\begin{theorem}\label{thm:main}
Let $k\ge2$ be a fixed integer.  Then every $3$-uniform hypergraph $H$ with $m$ edges satisfies
\[
        g_k(H)\le \frac{m}{k^3}+O_k(m^{2/3}).
\]
\end{theorem}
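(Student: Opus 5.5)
We split the vertex set by degree and place high-degree vertices and low-degree vertices by different mechanisms. Fix a threshold $D=C m^{2/3}$ and let $B=\{v:d(v)\ge D\}$ be the set of big vertices. Since $\sum_v d(v)=3m$, we have $|B|\le 3m/D=O(m^{1/3})$. Edges meeting $B$ in at least two vertices are controlled by codegrees. Since $|B|=O(m^{1/3})$, there are only $\binom{|B|}{2}=O(m^{2/3})$ pairs inside $B$. The obstacle is that a single pair may have large codegree, so we handle this through the placement of $B$ rather than through a crude count.

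\textbf{Step 1: place $B$.} We choose a partition $B=B_1\cup\cdots\cup B_k$ that is judicious for the weighted structure induced by $B$. Each edge of $H$ restricts to $B$ as a triple inside $B$, a pair inside $B$, or a single vertex of $B$. This gives a weighted $3$-graph, a weighted graph on $B$ (with pair weight equal to the codegree into $V\setminus B$), and vertex weights $d(v)$.

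After placing all vertices, the expected number of edges in part $i$ should be $m/k^3$. For this, each pair inside $B_i$ with codegree $c$ into $V\setminus B$ contributes $c/k$, and each vertex $v\in B_i$ contributes roughly $d_{V\setminus B}(v)/k^2$. The natural target is therefore a partition of $B$ in which each part's weighted load equals its fair share up to $O(m^{2/3})$.

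We obtain this from a discrepancy/greedy argument. We insert the vertices of $B$ in decreasing order of degree and always put the next vertex into the currently least loaded part, where load is the conditional expectation. Each vertex changes loads by at most its degree, so one expects an error of order $\max_{v\in B} d(v)$. That can be as large as $m$, however, so a vertex of very large degree needs separate treatment. The remedy is to note that such a vertex $v$ has the link graph $L_v$, with $d(v)$ edges, and to split the remaining vertices judiciously with respect to $L_v$ using the graph result of Bollob\'as and Scott, with error $O(\sqrt{d(v)})=O(\sqrt m)$.

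I expect this interplay to be the main obstacle: simultaneously splitting all link graphs of the $O(m^{1/3})$ big vertices. I would try to handle it by a fractional/linear relaxation in which each part's load is a multilinear polynomial in the assignment, rounded by the iterated rounding of Beck--Fiala type. In such rounding, the number of constraints, namely $O(k)$ load constraints plus the $k$ link constraints per big vertex, is controlled relative to the number of variables.

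\textbf{Step 2: random placement of small vertices.} Conditioned on the placement of $B$, we assign each vertex of $V\setminus B$ to a part independently and uniformly at random. Then $e(V_i)$ is a degree-$3$ polynomial in independent variables with expectation equal to the Step 1 load. Every small vertex has degree less than $D$, and by the codegree structure the relevant pairs have bounded influence.

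We apply the Kim--Vu polynomial concentration inequality, or more simply a martingale/Azuma argument with vertex exposure. This gives deviation $O(\sqrt{\sum_v d(v)^2})\le O(\sqrt{D\cdot 3m})=O(m^{5/6})$, which is not good enough. The fix is to derandomize by the method of conditional expectations with a potential of the form $\sum_i \exp(\lambda(e(V_i)-m/k^3))$, or to perform local switching.

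A cleaner way to reach $m^{2/3}$ is an iterative greedy procedure. We place small vertices one at a time in increasing order of degree, always choosing the part that keeps the conditional expected loads balanced. The total error is then bounded by the maximum single-step change, $\max d(v)\le D=O(m^{2/3})$, because of a potential that is balanced at every step. Summing the errors from the two stages gives $g_k(H)\le m/k^3+O_k(m^{2/3})$.
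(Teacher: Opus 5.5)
Your proposal has genuine gaps; the most basic is that the target in Step 1 cannot be attained and the uniform placement in Step 2 is wrong in principle. Take a vertex $v$ together with all triples $\{v,x,y\}$ with $x,y\in X$, $|X|=n$. Then $m=\binom n2$ and $v$ is the only big vertex. If the small vertices are spread uniformly, the part containing $v$ receives about $m/k^2\gg m/k^3$ edges. This holds whether the spreading is random, greedy, or a Bollob\'as--Scott judicious partition of $L_v$, which only guarantees about $e(L_v)/k^2=m/k^2$ in each part. A good partition gives the part containing $v$ only about a $k^{-3/2}$ fraction of $X$.

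So the distribution $p\in S_k$ used for the good vertices must be chosen after, and depending on, the colouring $A_1\cup\dots\cup A_k$ of the bad vertices. It must satisfy $p_i^3m_0+p_i^2\alpha(A_i)+p_i\beta(A_i)+e_\Gamma(A_i)\le m/k^3+O(m^{2/3})$ for all $i$ at once, and you must show that some integral colouring of $A$ admits such a $p$. The paper starts from $x\equiv p\equiv 1/k$, where equality holds. It then rounds all but $k(k-1)$ bad vertices along directions that make the first-order terms nonpositive and the pair-weight quadratic form nonpositive. This is possible because a zero-diagonal nonnegative $3\times 3$ matrix has at most one positive eigenvalue. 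As a result the possibly huge weights $\beta_{ab}$ cost nothing, and only the simple $3$-graph $H[A]$ contributes, with error $O(|A|^2)$. The last few bad vertices are rounded using convexity of the capacity function $\rho_{\mu,T}$, which preserves the feasibility condition $\sum_i\rho_{m_0,T_i}(a_i,c_i)\ge 1$. Your least-loaded greedy insertion has no counterpart to any of this, and you note yourself that its error can be of order $m$.

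Step 2 has two further problems. First, the claim that a one-vertex-at-a-time greedy keeps the total error at $\max_v d(v)$ is unsupported. At each step the $k$ conditional loads change by a vector whose $p$-average over the choices is zero, but no single choice need make all $k$ changes nonpositive. Any potential-function version (including the exponential potential) pays a second-order term per step, giving error of order $\sqrt{\sum_v d(v)^2}\approx m^{5/6}$ again. To cancel first-order terms you must move $k+1$ vertices simultaneously along a direction orthogonal to all $k$ gradients, i.e.\ Beck--Fiala-type iterated rounding as in Lemma~\ref{lem:signed-rounding}. Then only second- and third-order terms survive, and they are controlled by codegrees, giving error $O\bigl(\sqrt{m\Delta_{2,H}(U)}+\Delta_H(U)\bigr)$.

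Second, that bound is $O(m^{2/3})$ only if good pairs have codegree $O(m^{1/3})$. Your degree threshold allows codegrees up to $m^{2/3}$. For instance, $m^{1/3}$ disjoint pairs, each of codegree $m^{2/3}$, already produce fluctuations of order $m^{5/6}$ under random placement. Your sentence ``by the codegree structure the relevant pairs have bounded influence'' is exactly the unproved point.

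The paper handles this with a dyadic codegree reduction, with $N=m^{1/3}$ and $\lambda=2^j$. It takes a maximal matching of good pairs of codegree $>\lambda N$. If the matching has fewer than $c_kN/\sqrt\lambda$ edges, its vertices are moved to the bad set, and the geometric series keeps $|A|=O(N)$. Otherwise each matched pair is forced into different parts. This kills more than $c_k\sqrt\lambda N^2$ edges, which pays for the $O(\sqrt\lambda N^2)$ rounding error coming from codegrees up to $2\lambda N$. Your proposal has no mechanism of this kind.
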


\section{Preliminaries}
For hypergraphs, we follow the notations in \cite{BM08}. Let \(r\) be an integer. An \(r\)-uniform hypergraph \(H\) is a pair \((V, E)\), where \(V\) is a finite set of vertices and \(E \subseteq \binom{V}{r}\) is a family of \(r\)-element subsets of \(V\) called edges. 
For a vertex \(v \in V\), the degree \(d_H(v)\) of \(v\) is the number of edges of \(H\) containing \(v\). 
For a pair of vertices $u,v\in V$, the codegree $d_H(u,v)$ of $u,v$ is the number of edges of $H$ containing $u,v$.
For a subset $U\subseteq V(H)$, let $(d_H)_2(U)=\sum_{uv\in\binom U2}d_H(u,v)$. Let $\Delta_H(U)=\max_{v\in U}d_H(v)$, and let $\Delta_{2,H}(U)=\max_{\{u,v\}\in \binom{U}{2}}d_H(u,v)$. 
Set $\Delta_H(U)=0$ if $U=\varnothing$ and $\Delta_{2,H}(U)=0$ if $|U|\le1$. Write $\Delta(H)=\Delta_H(V(H))$, $\Delta_2(H)=\Delta_{2,H}(V(H))$.

Let \(S, T\) be two disjoint subsets of \(V\). We write \(e_H(S) = |\{e \in E(H): e \subseteq S\}|\)
. When understood, the reference to \(H\) in the subscript may be dropped. Furthermore, for $S\subseteq V(H)$, we let $H[S]$ be the induced subhypergraph of $H$ on $S$, and let $H|_S$ be the restriction of $H$ to $S$ with multiplicities. That is, $H[S]$ has the edge set
\begin{align*}
    E(S):=E(H[S])=\{e\in E(H):e\subseteq S\},
\end{align*}
and $H|_S$ has the edge set
\begin{align*}
    E(H|_S)=\{e\cap S:e\in E(H),e\cap S\ne \varnothing\},
\end{align*}
where the right-hand side is regarded as a multiset.

Write $[k]=\{1,\dots,k\}$ and $S_k=\left\{(p_1,\dots,p_k)\in[0,1]^k: \sum_{i=1}^kp_i=1\right\}$.
Let $A$ be a finite set and $\alpha:A\to \mathbb R_{\ge0}$ and $\beta:\binom {A}{2}\to \mathbb R_{\ge0}$ be two nonnegative functions. For disjoint $S,T\subseteq A$, put
\begin{align*}
    \alpha(S)=\sum_{a\in S}\alpha_a,\quad
    \beta(S)=\sum_{ab\in\binom S2}\beta_{ab},\quad
    \text{and}\quad
    \beta(S,T)=\sum_{a\in S,\,b\in T}\beta_{ab}.
\end{align*}

A \emph{bad-set configuration} is a tuple
        $\cS=(A;m_0,\alpha,\beta,\Gamma),$
where $m_0\ge0$, $\alpha:A\to\mathbb R_{\ge0}$ is a nonnegative vertex weighting on $A$, $\beta:\binom{A}{2}\to\mathbb R_{\ge0}$ is a nonnegative weighted graph on $A$, and $\Gamma$ is a simple $3$-graph on $A$. We write
\[
        M(\cS)=m_0+\alpha(A)+\beta(A)+e(\Gamma).
\]

A \textit{fractional $k$-coloring} of $A$ is a family
$x=(x_{a,i})_{a\in A,i\in[k]}$ such that
$x_a=(x_{a,1},\ldots,x_{a,k})\in S_k$ for every $a\in A$.
It is called \textit{integral} if $x_{a,i}\in\{0,1\}$ for all $a\in A$ and
$i\in[k]$. For $p=(p_1,\ldots,p_k)\in S_k$ and $i\in[k]$, define
\begin{align*}
 L_i(\cS;p,x)
 =p_i^3m_0+p_i^2\sum_{a\in A}\alpha_ax_{a,i}
   +p_i\sum_{ab\in\binom A2}\beta_{ab}x_{a,i}x_{b,i},
   \qquad
 G_i(\Gamma;x)
 =\sum_{abc\in E(\Gamma)}x_{a,i}x_{b,i}x_{c,i}.
 \end{align*}
 Let  $K_i(\cS;p,x)
 =L_i(\cS;p,x)+G_i(\Gamma;x).$ 
If $A=A_1\cup\cdots\cup A_k$ is an integral $k$-coloring of $A$, for $i\in[k]$, we also write 
\[
 K_i(\cS;p,A_i)
 =p_i^3m_0+p_i^2\alpha(A_i)+p_i\beta(A_i)+e_\Gamma(A_i).
\]

\section{Hypergraphs with small maximum codegree}

Let $H$ be a 3-graph and let $V(H)=A\cup U$ be a bipartition of the vertex set of $H$. We call the vertices of $A$ \emph{bad} and $A$ the \emph{bad set}.  And we call the vertices of $U$ \emph{good} and $U$ the \emph{good set}. 
As observed in \cite{Scott05}, the main difficulty comes from vertices of large degree. When all degrees are sufficiently small, a random partition together with a martingale inequality already gives useful concentration. A standard strategy is therefore to separate a small set of high-degree vertices from the vertex set as bad vertices and to label the remaining vertices good. We follow this method, but during the proof some good vertices will also be promoted to the bad set in order to control pair-codegrees.

In this section, we consider a simpler case: a 3-uniform hypergraph in which the codegree of every pair of good vertices is relatively small.

\subsection{Partitioning most of the bad vertices}
\label{subsec:partition A}
We start with the partition of $H|_A$. Indeed, our subsequent result concerns general mixed 3-multigraphs where every 3-edge has multiplicity 1.

Let $V$ be a set of size $n$, $w:\binom V2\to\mathbb R_{\ge0}$ be a
nonnegative weighted graph, and $F$ be a simple $3$-graph on $V$.
For every $i\in[k]$, let $\gamma_i\ge0$, $c_i\in\mathbb R$, and $\ell_{u,i}\in\mathbb R$ for $u\in V$. 
For a fractional $k$-coloring $x$ of $V$, define
\begin{align*}
  P_i(x)=c_i+\sum_{u\in V}\ell_{u,i}x_{u,i}
       +\gamma_i\sum_{uv\in\binom V2}w_{uv}x_{u,i}x_{v,i}
       +\sum_{uvw\in E(F)}x_{u,i}x_{v,i}x_{w,i}.
\end{align*}

\begin{lemma}\label{lem:graph-preserving}
Let $V$ be a set of size $n$, $w:\binom V2\to\mathbb R_{\ge0}$ be a
nonnegative weighted graph, and $F$ be a simple $3$-graph on $V$.
Let $P_i(x)$ be the functions defined above.
For every fractional $k$-coloring $x$ of $V$, there is a fractional
$k$-coloring $x'$ such that
\begin{enumerate}[label=\textup{(\roman*)},leftmargin=2.3em]
\item $\{i\in[k]:x'_{v,i}\ne0\}\subseteq\{i\in[k]:x_{v,i}\ne0\}$ for every $v\in V$;
\item at most $k(k-1)$ vertices are non-integral;
\item for every $i\in[k]$,
\begin{align*}
    P_i(x')\le P_i(x)+12\sqrt6(k-1)\sqrt{e(F)\Delta_2(F)}.
\end{align*}
\end{enumerate}
\end{lemma}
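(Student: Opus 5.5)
We will round $x$ in two stages: first an exact linear-algebra (iterative rounding) stage that keeps the linear and quadratic parts under control, then a randomized or greedy stage that handles the cubic part, which is where the error term $\sqrt{e(F)\Delta_2(F)}$ enters.

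\textbf{Reduction to multilinear moves.} Fix a vertex $v$ and move only the row $x_v$ inside the face of $S_k$ spanned by $\{i:x_{v,i}\ne0\}$. Each $P_i$ is affine in $x_v$, because every term contains $x_{v,i}$ at most once: $w$ has no loops and $F$ is simple. So along any segment in that face, every $P_i$ changes linearly, and condition (i) is automatic.

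\textbf{Linear-algebra stage.} Let $N$ be the set of non-integral vertices. Consider simultaneous perturbations $x_v\mapsto x_v+t\,d_v$ for $v\in N$, where each $d_v$ is supported on the support of $x_v$ and has coordinate sum $0$. Whenever $|N|>k(k-1)$, the space of such directions has dimension at least $\sum_{v\in N}(s_v-1)\ge|N|$, where $s_v\ge2$ is the support size. We would like to impose that each $P_i$ is stationary to first order, which is $k$ linear constraints. The difficulty is that the $P_i$ are not jointly linear in $t$: products $x_{u,i}x_{v,i}$ with $u,v\in N$ produce $t^2$ and $t^3$ terms.

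The standard remedy is to move only an \emph{independent} set of vertices at a time, so that no edge of $w$ or $F$ contains two moving vertices. Then each $P_i$ is exactly linear in $t$. We can choose $t$ of the right sign so that all $P_i$ stay constant, since we have enough degrees of freedom, and push $t$ until some coordinate hits $0$. Each push reduces the total support and never increases any $P_i$.

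Independence is not always available, however. In the $w$-graph we can move a pair $u,v$ jointly if we control the $t^2 w_{uv}$ term. Doing so requires exploiting that $d_u,d_v$ can be chosen so that $\sum_i \gamma_i w_{uv} d_{u,i}d_{v,i}\le 0$ for every $i$, and this is not generally possible.

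\textbf{Alternative, two-phase plan.} Instead, we handle the quadratic part exactly and the cubic part approximately. We add the quadratic form to the linear constraints through convexity: restricted to two vertices $u,v$, the function is bilinear. The concrete approach is pipage-type rounding.

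Repeatedly pick two fractional vertices, or two fractional coordinates of one vertex. Each $P_i$ restricted to the move is affine in each of the two vertices separately. A one-vertex move with $k-1$ directions can satisfy at most $k-2$ constraints; with more moving vertices placed in distinct edges we can satisfy all $k$.

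The cubic error arises in the final phase. After reducing $w$-interactions, we round the remaining fractional vertices independently at random. The change in $\sum_{F} x_ux_vx_w$ is then a martingale whose variance is bounded by $\sum_{\text{pairs}} d_F(u,v)^2\le \Delta_2(F)\cdot 3e(F)$. Chebyshev together with a union bound over $k$ colors, or a derandomized Azuma argument, gives deviation $O(k\sqrt{e(F)\Delta_2(F)})$. The constant $12\sqrt6(k-1)$ suggests a second-moment computation with $\sqrt{6}$ coming from $\binom{3}{2}\cdot 2$.

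\textbf{Main obstacle.} The hardest part is reconciling the two phases: the random rounding must not damage the linear and $w$-quadratic parts. The first thing to try is to round by a random perturbation in the null space of those constraints, so that the expectation of every $P_i$ is preserved. Concentration then matters only for the cubic term, and we accept an $O(1)$ number of leftover fractional vertices, at most $k(k-1)$, where degrees of freedom run out.
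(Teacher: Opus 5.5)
Your proposal has a genuine gap. It never gives a way to stop the $w$-quadratic part from increasing, and the randomized phase you fall back on cannot give (iii).

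The bound in (iii) involves only $F$, while $c_i,\ell_{u,i},\gamma_i,w_{uv}$ are arbitrary; in the application they carry the $\alpha$- and $\beta$-weights of the high-degree vertices. Rounding many vertices independently at random (call the outcome $X$) preserves each $\mathbb{E}\,P_i$. But the linear part alone fluctuates with variance $\sum_u\ell_{u,i}^2x_{u,i}(1-x_{u,i})$, which nothing in terms of $F$ controls. Moving in a null space so that expectations are preserved does not help, because (iii) is a deterministic one-sided bound for every $i$. This is why the lemma leaves up to $k(k-1)$ vertices fractional instead of rounding them.

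Even for the cubic part alone your variance estimate is wrong. The first-order Hoeffding component of $\sum_{uvw\in E(F)}X_{u,i}X_{v,i}X_{w,i}$ is $\sum_v(X_{v,i}-x_{v,i})\sum_{uw:\,uvw\in E(F)}x_{u,i}x_{w,i}$. Its variance can be of order $\sum_v d_F(v)^2$, not $\sum_{uv}d_F(u,v)^2$. For $F$ complete on $n$ vertices and all $x_{v,i}=1/k$, the fluctuations are of order $n^{5/2}$, whereas $\sqrt{e(F)\Delta_2(F)}\asymp n^2$. Avoiding that extra factor $\sqrt n$ is the whole point of the lemma.

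The missing idea is a deterministic move that handles the $w$-part exactly.

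\emph{The move.} Work with one color pair $\{a,b\}$ at a time and a triple $Q\subseteq R_{ab}=\{v:x_{v,a}>0,\ x_{v,b}>0\}$. For $v\in Q$ set $\delta_{v,a}=th_v$ and $\delta_{v,b}=-th_v$. Then only $P_a$ and $P_b$ change, so you face two constraints, not $k$. Because the two coordinates flip sign together, the second-order $w$-contribution is $t^2\gamma_i\sum_{uv\in\binom{Q}{2}}w_{uv}h_uh_v$ for both $i=a$ and $i=b$.

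\emph{Why three vertices suffice.} You are right that with two moving vertices one cannot force $h_uh_v\le0$ after imposing a first-order condition. With three you can. A $3\times3$ symmetric matrix with zero diagonal and nonnegative off-diagonal entries $s_1,s_2,s_3$ has characteristic polynomial $\lambda^3-(s_1^2+s_2^2+s_3^2)\lambda-2s_1s_2s_3$, hence at most one positive eigenvalue. So it is not positive definite on the two-dimensional space $g^\perp$, where $g$ is the $a$-gradient restricted to $Q$. Take $h\in g^\perp$, which makes the first-order change of $P_a$ zero, with $\sum_{uv\in\binom{Q}{2}}w_{uv}h_uh_v\le0$. Flip the sign of $h$ if needed so that the first-order change of $P_b$ is $\le0$, then push to the boundary. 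Some coordinate in $Q$ hits zero, so $|R_{ab}|$ strictly decreases and supports never grow, which gives (i).

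\emph{Bounding the error.} The only possible increase comes from $F$ and is at most $2(d_F)_2(Q)$. Choose $Q$ minimizing $(d_F)_2(Q)$; averaging bounds this by $\min\{18e(F)/(r(r-1)),3\Delta_2(F)\}$ with $r=|R_{ab}|$ strictly decreasing. Summing over $r$ with a split at $r=\sqrt{6e(F)/\Delta_2(F)}$ gives at most $6\sqrt6\sqrt{e(F)\Delta_2(F)}$. This split, not a second-moment calculation, is where $\sqrt6$ comes from. The factor $2$ and the $k-1$ pairs containing each color then give $12\sqrt6(k-1)$.

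\emph{Termination.} Once every $|R_{ab}|\le2$, at most $k(k-1)$ vertices are non-integral. No randomness and no dependence on $\ell,w,\gamma$ ever enters.
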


\begin{proof}[\bf Proof.]
For colors $a<b$, let
\[
        R_{ab}=\{v\in V:x_{v,a}>0\text{ and }x_{v,b}>0\}.
\]
While some $R_{ab}$ has size $r\ge3$, choose a triple
$Q\in\binom{R_{ab}}3$ minimizing $(d_F)_2(Q)$.
By the exact multilinear expansion, for any displacement $\delta$ and $i\in[k]$ we have
\begin{align}\label{eq:Taylor-expansion-of-P(x)}
    P_i(x+\delta)-P_i(x)
    =\sum_{v\in Q}\frac{\partial P_i}{\partial x_{v,i}}\delta_{v,i}
    +\frac12\sum_{v\in Q}\sum_{u\ne v} \left(\gamma_iw_{uv}+\sum_{w:uvw\in E(F)}x_{w,i}\right)\delta_{u,i}\delta_{v,i}\notag\\
    +\sum_{uvw\in E(F)}\delta_{u,i}\delta_{v,i}\delta_{w,i}.
\end{align}

Let $B_Q$ be the symmetric matrix indexed by $Q$ with zero diagonal and
$(B_Q)_{uv}=w_{uv}/2$ off the diagonal.

\begin{claim}\label{clm:inertia}
Let $B$ be a real symmetric $3\times3$ matrix with zero diagonal and
nonnegative off-diagonal entries.  For every $g\in\R^3$, there is a nonzero
$h\in\R^3$ such that $g\cdot h=0$ and $h^{\top}Bh\le0$.
\end{claim}
\begin{proof}
If the off-diagonal entries are $a,b,c\ge0$, then
\[
        \det(\lambda I-B)
        =\lambda^3-(a^2+b^2+c^2)\lambda-2abc.
\]
This polynomial has at most one positive root, so the positive index of
inertia of $B$ is at most one.  Since $g^\perp$ has dimension at least two,
the quadratic form of $B$ cannot be positive definite on $g^\perp$.
Therefore there is a nonzero $h\in g^\perp$ with
$h^{\top}Bh\le0$.
\end{proof}

Apply the claim to $g=\left(\frac{\partial P_a}{\partial x_{v,a}}\right)_{v\in Q}$ and $B=B_Q$.  We obtain a nonzero $h\in\R^Q$ satisfying
$g\cdot h=0$ and $h^{\top}B_Qh\le0$.  Replace $h$ by $-h$ if necessary
so that
 $-\sum_{v\in Q}\frac{\partial P_b}{\partial x_{v,b}}h_v\le0.$
Move along the direction
\begin{align*}
 x_{v,a}(t)=
 \begin{cases}
     x_{v,a}+th_v,&v\in Q;\\
     x_{v,a},&v\notin Q,
 \end{cases}
 \qquad
 x_{v,b}(t)=
 \begin{cases}
     x_{v,b}-th_v,&v\in Q;\\
     x_{v,b},&v\notin Q,     
 \end{cases}   
\end{align*}
until the first boundary point of the product of simplices is reached.  
Since both displayed coordinates were positive at every $v\in Q$, at least one coordinate at some $v\in Q$ becomes zero.

Let $\delta=(\delta_{v,i})_{v\in V,i\in[k]}$ be the family such that $\delta_{v,a}=th_v,\delta_{v,b}=-th_v$ for $v\in Q$ and $\delta=0$ in all other cases. 
Then by the definition of $h$, for $i=a,b$ we have 
\begin{align*}
    \sum_{v\in V}\frac{\partial P_i}{\partial x_{v,i}}\delta_{v,i}\le0,
    \quad
    \sum_{v\in Q}\sum_{u\ne v} w_{uv}\delta_{u,i}\delta_{v,i}\le0.
\end{align*}
By \eqref{eq:Taylor-expansion-of-P(x)}, we have
\begin{align*}
    P_a(x+\delta)-P_a(x)
    &\le\frac12\sum_{v\in Q}\sum_{u\ne v} \sum_{w:uvw\in E(F)}x_{w,a}\delta_{u,a}\delta_{v,a}+\sum_{uvw\in E(F)}\delta_{u,a}\delta_{v,a}\delta_{w,a}
    \le 2(d_{F})_2(Q),\\
    P_b(x+\delta)-P_b(x)
    &\le\frac12\sum_{v\in Q}\sum_{u\ne v} \sum_{w:uvw\in E(F)}x_{w,b}\delta_{u,b}\delta_{v,b}+\sum_{uvw\in E(F)}\delta_{u,b}\delta_{v,b}\delta_{w,b}
    \le 2(d_{F})_2(Q).
\end{align*}

Averaging over all triples of $R_{ab}$ together with the minimality of $(d_F)_2(Q)$ gives
\[
 (d_F)_2(Q)
 \le \frac{6}{r(r-1)}
      \sum_{uv\in\binom{R_{ab}}2}d_F(u,v)
 \le \min\left\{\frac{18e(F)}{r(r-1)},3\Delta_2(F)\right\}.
\]
Every operation using the color pair $\{a,b\}$ strictly decreases
$|R_{ab}|$.  If $e(F)\Delta_2(F)=0$, then this operation cannot increase any $P_i(x)$.  Otherwise, set $R_0=\sqrt{6e(F)/\Delta_2(F)}\ge1$ and we have
\begin{align*}
    \sum_{r=3}^n\min\left\{\frac{18e(F)}{r(r-1)},3\Delta_2(F)\right\}
    &\le \sum_{r\le R_0}3\Delta_2(F)+\sum_{r> R_0}\frac{18e(F)}{r(r-1)}\\
    &\le 3(\lfloor R_0\rfloor-2) \Delta_2(F)+\frac{18e(F)}{\lfloor R_0\rfloor}\\
    &\le6\sqrt6\sqrt{e(F)\Delta_2(F)}.
\end{align*}
It means that all operations using the color pair $\{a,b\}$ increase $P_a(x),\,P_b(x)$ by at most $12\sqrt6\sqrt{e(F)\Delta_2(F)}$.
Let $x'$ be the final fractional coloring. Then 
\begin{align*}
    P_i(x')-P_i(x)\le 12\sqrt6(k-1)\sqrt{e(F)\Delta_2(F)}.
\end{align*}

At termination, there are at most two vertices $u,v$ for each color pair $i,j$ such that $x'_{w,k}\notin\{0,1\}$ for $w\in\{u,v\},\,k\in\{i,j\}$. Hence at most $k(k-1)$ vertices remain non-integral.
\end{proof}

\subsection{Partitioning remaining bad vertices}
\label{subsec: partition A and give p for U}
In this subsection, we partition the remaining $O(1)$ bad vertices. We aim to give an integral $k$-coloring of $A$ such that the increase of $G_i(H;\xi)$ only relies on $H[A]$.

Fix $\mu,T\ge0$.  For $a,c\ge0$, define 
\[
 \rho_{\mu,T}(a,c)=\sup\{r\ge0:\mu r^3+ar^2+cr\le T\}.
\]
Note that if $\mu+a+c>0$ then $\rho_{\mu,T}(a,c)$ is exactly the unique solution of $\mu r^3+ar^2+cr= T$, and if $\mu=a=c=0$ then $\rho_{\mu,T}(a,c)$ takes the value $+\infty$.

\begin{lemma}\label{lem:capacity-convex}
Let $\mu,T,a,c,u,v\ge0$.  The function
$f(t)=\rho_{\mu,T}(a+ut,c+vt)$
is convex, in the extended-valued sense, on every interval $I$ on which
$a+ut\ge0$ and $c+vt\ge0$.
\end{lemma}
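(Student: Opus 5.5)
The plan is to differentiate the defining equation of $\rho$ implicitly. This gives a sign condition for $f''$ that can be checked directly. I will use that $u,v\ge0$, as in the statement. Write $A(t)=a+ut$, $C(t)=c+vt$, and
\[
g(r,t)=\mu r^3+A(t)r^2+C(t)r ,
\]
so that $f(t)=\sup\{r\ge0:g(r,t)\le T\}$.

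\emph{Degenerate cases.} If $T=0$ then $f\equiv0$, or $f=+\infty$ exactly where $\mu=A=C=0$. Since $A$ and $C$ are nondecreasing in $t$, the set $J=\{t\in I:\mu=A(t)=C(t)=0\}$ is either all of $I$ or empty. (If $\mu>0$, or $u=v=0$ and $a+c>0$, then $J=\varnothing$. If $u=v=0$ and $a=c=0$ and $\mu=0$, then $J=I$. If $u+v>0$ and $\mu=0$, then $J$ is at most a single point, and only when $a=c=0$ at that $t$; an isolated value $+\infty$ at an endpoint of the effective domain is harmless for extended convexity, but this will need a line of care, since $A(t),C(t)\ge0$ forces that point to be an endpoint of $I$.) On $J$ the function is $+\infty$, which is compatible with extended-valued convexity when $J$ is all of $I$ or an endpoint. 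So it suffices to prove convexity on the subinterval where $\mu+A(t)+C(t)>0$ and $T>0$. There $f(t)$ is the unique positive root of $g(r,t)=T$, and $g_r=3\mu r^2+2Ar+C>0$ at $r=f(t)>0$. By the implicit function theorem $f$ is $C^2$, hence it suffices to show $f''\ge0$.

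\emph{Second derivative.} Differentiating $g(f(t),t)=T$ once gives $f'=-g_t/g_r$. Differentiating again, and using that $g$ is linear in $t$ so $g_{tt}=0$, gives
\[
g_rf''=-\bigl(g_{rr}f'^2+2g_{rt}f'\bigr).
\]
Substituting $f'=-g_t/g_r$ turns this into
\[
f''=\frac{g_t\,\bigl(2g_{rt}g_r-g_{rr}g_t\bigr)}{g_r^3}.
\]
The partial derivatives are
\[
g_t=r(ur+v),\qquad g_{rt}=2ur+v,\qquad g_{rr}=6\mu r+2A .
\]
Since $u,v\ge0$ we have $g_t\ge0$, so it remains to prove $g_{rr}g_t\le2g_{rt}g_r$.

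\emph{Key inequality.} This is elementary once all terms are known to be nonnegative, which holds because $A,C,\mu,u,v,r\ge0$. First, $2ur+v\ge ur+v$. Second, dropping $C$,
\[
2(3\mu r^2+2Ar+C)\ge6\mu r^2+4Ar\ge(6\mu r+2A)r .
\]
Multiplying the two bounds gives
\[
2g_{rt}g_r\ge2(ur+v)(3\mu r^2+2Ar)\ge r(ur+v)(6\mu r+2A)=g_tg_{rr}.
\]
Hence $f''\ge0$.

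I expect the main obstacle to be the bookkeeping for the extended-valued and boundary cases, namely where $T=0$ or where $\mu=A=C=0$ at some $t$. The analytic core is the short sign computation above, and it hinges on $u,v\ge0$.
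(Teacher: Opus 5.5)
Your proof is correct and is essentially the paper's argument: you implicitly differentiate $\mu r^3+Ar^2+Cr=T$, and your inequality $g_{rr}g_t\le 2g_{rt}g_r$ is exactly the nonnegativity of the factor $3u\mu r^3+3uAr^2+2uCr+vAr+vC$ in the paper's formula for $r''$. The only difference is in the degenerate cases: you show directly that $\{t\in I:\mu=A(t)=C(t)=0\}$ is $\varnothing$, $I$, or $\{\min I\}$ (so delete your opening claim that it is always $\varnothing$ or $I$), whereas the paper replaces $A,C$ by $A+\varepsilon,C+\varepsilon$ and takes the pointwise limit of the convex functions $\rho_{\mu,T}(A+\varepsilon,C+\varepsilon)$ as $\varepsilon\downarrow0$.
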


\begin{proof}[\bf Proof.]
Suppose first that $\mu+(a+ut)+(c+vt)>0$ on $I$ and $T>0$, i.e., $f(t)$ is finite and positive.  Write
$A=a+ut$, $C=c+vt$, and $r=\rho_{\mu,T}(A,C)$.  Then
\[
        \mu r^3+Ar^2+Cr=T,
        \qquad
        D=3\mu r^2+2Ar+C>0.
\]
Differentiating twice and simplifying yields
\begin{equation}
 r''=\frac{2r(ur+v)}{D^3}
 \bigl(3uAr^2+vAr+2uCr+vC+3u\mu r^3\bigr)\ge0.
 \label{eq:capacity-second-derivative}
\end{equation}
Thus the assertion holds whenever the root is finite and positive.

If $T=0$, then $r\equiv0$; the only other alternatives are the
constant function $r\equiv+\infty$ or  a function that is $+\infty$ at an endpoint and 0 elsewhere. These functions are convex in the extended-valued sense.  Assume now that
$T>0$.  For $\eps>0$, replace the two coefficients $A$ and $C$ by
$A+\eps$ and $C+\eps$.  Then $\rho_{\mu,T}(A+\eps,C+\eps)$ is finite and positive.
Hence $\rho_{\mu,T}(A+\eps,C+\eps)$ is convex by \eqref{eq:capacity-second-derivative}, and converges pointwise
to $\rho_{\mu,T}(A,C)$ as $\eps\downarrow0$. This proves the remaining cases.
\end{proof}

\begin{lemma}\label{lem:lower-rank-completion}
Let $\mathcal S=(A;m_0,\alpha,\beta,\Gamma)$ be a bad-set
configuration. 
Let $T_1,\ldots,T_k\ge0$.  Suppose that, for some $p^0\in S_k$, a fractional $k$-coloring $x$ of $A$, and $i\in[k]$,
        $L_i(\mathcal S;p^0,x)\le T_i.$
Then there are an integral coloring $z$ and a vector $p\in S_k$
such that, for every $i\in[k]$,
\[
        L_i(\mathcal S;p,z)\le T_i,
\]
and every vertex that is integral in $x$ may be kept fixed in $z$.
\end{lemma}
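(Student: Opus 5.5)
The plan is to reduce the statement to a feasibility criterion in terms of the capacity function $\rho$, and then round the fractional vertices one at a time. Rounding uses the convexity in \cref{lem:capacity-convex}. Throughout, the $3$-graph $\Gamma$ plays no role, since $L_i$ does not involve $G_i$.

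\textbf{Step 1 (feasibility criterion).} For a fractional coloring $y$ of $A$ and $i\in[k]$, put
\[
a_i(y)=\sum_{a\in A}\alpha_a y_{a,i},\qquad c_i(y)=\sum_{ab\in\binom A2}\beta_{ab}y_{a,i}y_{b,i},\qquad r_i(y)=\rho_{m_0,T_i}(a_i(y),c_i(y))\in[0,+\infty].
\]
Then $L_i(\cS;p,y)=m_0p_i^3+a_i(y)p_i^2+c_i(y)p_i$. This is a nondecreasing continuous function of $p_i\ge0$, so the supremum defining $\rho$ is attained when finite. Hence $L_i(\cS;p,y)\le T_i$ holds if and only if $p_i\le r_i(y)$.

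It follows that there exists $p\in S_k$ with $L_i(\cS;p,y)\le T_i$ for all $i$ if and only if $\Phi(y):=\sum_i r_i(y)\ge1$. For sufficiency, choose $p_i\le\min\{r_i(y),1\}$ with $\sum_ip_i=1$; this is possible whenever the sum of the caps is at least $1$, and an infinite $r_i$ simply caps $p_i$ at $1$. The hypothesis, applied at $p^0$, gives $p^0_i\le r_i(x)$ for every $i$, and therefore $\Phi(x)\ge\sum_ip^0_i=1$.

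\textbf{Step 2 (rounding one vertex).} Let $a\in A$ be non-integral in the current coloring $y$. Vary only $y_a=(t_1,\dots,t_k)\in S_k$ and keep all other vertices fixed. Since $\beta$ lives on pairs of distinct vertices, no term $y_{a,i}^2$ appears, so
\[
a_i=a_i^0+\alpha_a t_i,\qquad c_i=c_i^0+v_it_i,\qquad v_i=\sum_{b\ne a}\beta_{ab}y_{b,i}\ge0,
\]
where $a_i^0,c_i^0\ge0$ are the contributions not involving $a$. Both expressions are nonnegative and affine in $t_i\in[0,1]$ with nonnegative slopes.

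By \cref{lem:capacity-convex}, each map $t_i\mapsto r_i$ is convex in the extended-valued sense on $[0,1]$. Hence $\Phi$, as a function of $y_a$, is a separable sum of convex functions and is therefore convex on the simplex $S_k$. Write $y_a=\sum_j t_je_j$. Convexity gives
\[
\Phi(y)\le\sum_jt_j\,\Phi(y\text{ with }y_a=e_j),
\]
so some vertex $e_j$ of the simplex satisfies $\Phi\ge\Phi(y)\ge1$. Replace $y_a$ by this $e_j$. The criterion $\Phi\ge1$ is preserved, and every other vertex, in particular every already integral one, is untouched.

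\textbf{Step 3 (iteration).} Apply Step 2 successively to each non-integral vertex of $x$. This yields an integral coloring $z$ that agrees with $x$ on all integral vertices of $x$ and satisfies $\Phi(z)\ge1$. Step 1 then supplies $p\in S_k$ with $L_i(\cS;p,z)\le T_i$ for all $i$.

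\textbf{Main obstacle.} The delicate point is the extended-valued bookkeeping. One must check that convexity of $\Phi$ and the averaging inequality remain valid when some $r_i$ equal $+\infty$ (for instance $m_0=a_i=c_i=0$) or are $0$ (when $T_i=0$). One must also verify the equivalence $L_i\le T_i\iff p_i\le r_i$ in these degenerate cases. I would handle this by treating $+\infty$ with the usual conventions: if some $r_i=+\infty$ at a point, the required inequality at that point is trivial. Alternatively, one can argue by the same $\eps$-perturbation used in the proof of \cref{lem:capacity-convex}.
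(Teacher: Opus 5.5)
Your proposal is correct and follows essentially the same route as the paper. Both arguments use the feasibility criterion $\sum_i\rho_{m_0,T_i}(a_i,c_i)\ge1$ for the existence of $p$, and then round one non-integral vertex at a time via the convexity from Lemma~\ref{lem:capacity-convex}, so that some pure color $e_j$ does not decrease the sum. The differences are cosmetic: the paper sums the one-variable inequalities $f_i(t_i)\le(1-t_i)f_i(0)+t_if_i(1)$ directly rather than invoking Jensen for a separable convex function on the simplex, and it normalizes $p_i=r_i/\sum_j r_j$ rather than capping each $p_i$ at $\min\{r_i,1\}$.
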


\begin{proof}[\bf Proof.]
For a current fractional coloring $x$, for $i\in[k]$, put
$a_i=\sum_{v\in A}\alpha_vx_{v,i}$ and $c_i=\sum_{uv\in\binom A2}\beta_{uv}x_{u,i}x_{v,i}$. Define
\[
        \Psi(x)=\sum_{i=1}^k\rho_{m_0,T_i}(a_i,c_i).
\]
We first state the following result.
\begin{claim}
    For fixed nonnegative $a_i,c_i,T_i$, there exists $p\in S_k$ with
\[
        m_0p_i^3+a_ip_i^2+c_ip_i\le T_i
\]
for $i\in[k]$ if and only if
\[
        \sum_{i=1}^k\rho_{m_0,T_i}(a_i,c_i)\ge1.
\]
\end{claim}
\begin{proof}
    Necessity is immediate.  Conversely, choose finite numbers $r_i\in[0,\rho_{m_0,T_i}(a_i,c_i)]$ such that $\sum_{i=1}^kr_i\ge1$. Set $R=\sum_i r_i$.
If the displayed sum is at least $1$, i.e., $R\ge1$, then choose $p\in S_k$ by setting $p_i=r_i/R$. Then $p$ satisfies $p_i\le\rho_{m_0,T_i}(a_i,c_i)$ for every $i$.  This proves the claim.
\end{proof}

This Claim shows that $\Psi(x)\ge1$ and it suffices to find an integral coloring $z$ such that $\Psi(z)\ge1$. 

Now fix a non-integral vertex $v$ and keep all other coordinates fixed.  For each
$i\in[k]$ there are nonnegative numbers $a_{i,0},c_{i,0},c_{i,1}$ such that $a_i=a_{i,0}+\alpha_vt,\,c_i=c_{i,0}+c_{i,1}t$ where $t=x_{v,i}$.
Hence
\[
 f_i(t)=\rho_{m_0,T_i}(a_{i,0}+\alpha_vt,c_{i,0}+c_{i,1}t)
\]
is convex on $[0,1]$ by Lemma~\ref{lem:capacity-convex}.  Let $e_j$ denote
the $j$th unit vector of $S_k$.  The convexity gives $f_i(x_{v,i})\le (1-x_{v,i})f_i(0)+x_{v,i}f_i(1)$ for any $v$ and $i$. Then we have
\begin{align*}
 \Psi(x)
 &
 \le
 \sum_{i=1}^k\left(\sum_{j\ne i}x_{v,j}f_i(0)+x_{v,i}f_i(1)\right)
 =\sum_{j=1}^k x_{v,j}\,\Psi(x|_{x_v=e_j})
 \le\max_{j\in[k]}\Psi(x|_{x_v=e_j}).
\end{align*}
Thus $v$ can be assigned one color without decreasing $\Psi$.  Repeating
this operation rounds every non-integral vertex while preserving
$\Psi\ge1$.  Let $z$ be the final integral coloring. Then we have $\Psi(z)\ge1$. This completes the proof.
\end{proof}

\subsection{Partitioning good vertices}
\label{subsec: partition U}
In this subsection, we partition all good vertices. Our next result is weaker than Lemma~\ref{lem:graph-preserving}, but more general in scope.

Let $\mathcal B$ be a finite set of blocks and $\Sigma$ be a finite state
set.  A fractional state assignment is a family
$y=(y_{B,\sigma})$ such that $y_{B,\sigma}\ge0$ for every $B,\sigma$, and $\sum_{\sigma\in\Sigma}y_{B,\sigma}=1$ for every $B$.
For every $i\in[k]$, let 
\begin{align*}
    \bar P_i(y)
    =C_i
    +\sum_{B\in \mathcal B}\sum_{\sigma\in \Sigma}C^{(i)}_{\{(B,\sigma)\}}y_{B,\sigma}
    +\sum_{\{B,C\}\in \binom{\mathcal B}{2}}\sum_{\sigma,\tau\in \Sigma}C^{(i)}_{\{(B,\sigma),(C,\tau)\}}y_{B,\sigma}y_{C,\tau}\\
    +\sum_{\{B,C,D\}\in \binom{\mathcal B}{3}}\sum_{\sigma,\tau,\rho\in \Sigma}C^{(i)}_{\{(B,\sigma),(C,\tau),(D,\rho)\}}y_{B,\sigma}y_{C,\tau}y_{D,\rho},
\end{align*}
where all the coefficients are reals. 
For distinct $B,C\in \mathcal B$, let $w_{BC}$ be the sum of $\left|C^{(i)}_{\{(B_1,\sigma_1),\ldots,(B_r,\sigma_r)\}}\right|$ where $B,C\in \{B_1,\ldots,B_r\}$ and $\sigma_1,\ldots,\sigma_{r}\in\Sigma$, that is, 
\begin{align*}
    w_{BC}
    =\sum_{i=1}^k\sum_{\sigma,\tau\in \Sigma}\left|C^{(i)}_{\{(B,\sigma),(C,\tau)\}}\right|
    +\sum_{i=1}^k\sum_{D\ne B,C}\sum_{\sigma,\tau,\rho\in \Sigma}\left|C^{(i)}_{\{(B,\sigma),(C,\tau),(D,\rho)\}}\right|.
\end{align*}
Furthermore, define
    $W=\sum_{BC\in\binom{\mathcal B}{2}}w_{BC},
    \,
    \Omega=\max_{BC\in\binom{\mathcal B}{2}}w_{BC}.$
Here we set $\Omega=0$ if $|\mathcal B|\le1$. Define
\begin{align*}
    D_B
    =\sum_{i=1}^k\sum_{\sigma\in \Sigma}\left|C^{(i)}_{\{(B,\sigma)\}}\right|
    +\sum_{i=1}^k\sum_{C\ne B}\sum_{\sigma,\tau\in \Sigma}\left|C^{(i)}_{\{(B,\sigma),(C,\tau)\}}\right|+\sum_{i=1}^k\sum_{\{C, D\}\in \binom{\mathcal B\setminus\{B\}}{2}}\sum_{\sigma,\tau,\rho\in \Sigma}\left|C^{(i)}_{\{(B,\sigma),(C,\tau),(D,\rho)\}}\right|,
\end{align*}
and write $D=\max_{B\in \mathcal B}D_B$.

\begin{lemma}\label{lem:signed-rounding}
Let $\mathcal B$ be a finite set of blocks, $\Sigma$ be a finite state
set of size $q$, and $y$ be a fractional state assignment.
Let $\bar P_1(y),\ldots,\bar P_k(y)$ be polynomials defined as above.
Then for any fractional state assignment $y$, there is an integral state assignment $z$ such that, for $i\in[k]$,
\[
        \bar P_i(z)\le \bar P_i(y)
        +2k(k+1)q(q-1)\left(\sqrt{W\Omega}+D\right).
\]
\end{lemma}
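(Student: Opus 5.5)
We adapt the iterative rounding of Lemma~\ref{lem:graph-preserving}: move a small set of fractional blocks along a direction on which every first-order term cancels, then bound the resulting quadratic and cubic increments by the block weights. Since $\bar P_i$ has signed coefficients, Claim~\ref{clm:inertia} is no longer available. So we make the linear terms vanish exactly and pay for the quadratic and cubic terms in absolute value.

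For a pair of states $\sigma<\tau$, let
\[
R_{\sigma\tau}=\{B\in\mathcal B:y_{B,\sigma}>0,\ y_{B,\tau}>0\}.
\]
While some $R_{\sigma\tau}$ has size $r\ge k+1$, choose a set $Q\in\binom{R_{\sigma\tau}}{k+1}$ minimizing $w(Q):=\sum_{BC\in\binom Q2}w_{BC}$. Move along $y_{B,\sigma}\mapsto y_{B,\sigma}+th_B$ and $y_{B,\tau}\mapsto y_{B,\tau}-th_B$ for $B\in Q$.

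\textbf{Step 1: cancel the first-order terms.} Choose $h\ne0$ with $\|h\|_\infty\le1$ satisfying the $k$ linear conditions
\[
\sum_{B\in Q}\Bigl(\tfrac{\partial\bar P_i}{\partial y_{B,\sigma}}-\tfrac{\partial\bar P_i}{\partial y_{B,\tau}}\Bigr)h_B=0,\qquad i\in[k].
\]
Such an $h$ exists because there are $k$ constraints and $k+1$ unknowns. Increase $t$ until some coordinate hits $0$; then $|R_{\sigma\tau}|$ strictly decreases.

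\textbf{Step 2: bound the higher-order terms.} With the linear part zero, the change in $\bar P_i$ consists of the terms in which at least two moved variables belong to distinct blocks $B,C\in Q$. Since $|th_B|\le1$ and all $y\in[0,1]$, this change is at most a constant (say $2$) times $\sum_{BC\in\binom Q2}w_{BC}=w(Q)$. This is because $w_{BC}$ absorbs exactly the absolute values of the quadratic coefficients on $\{B,C\}$ and the cubic coefficients containing both $B$ and $C$.

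Averaging over $Q\in\binom{R_{\sigma\tau}}{k+1}$ gives
\[
w(Q)\le\binom{k+1}{2}\frac{2W}{r(r-1)}\qquad\text{and}\qquad w(Q)\le\binom{k+1}2\Omega .
\]
Summing $\min\{k(k+1)W/(r(r-1)),\,\binom{k+1}{2}\Omega\}$ over $r$, with the threshold $R_0\approx\sqrt{W/\Omega}$ as in Lemma~\ref{lem:graph-preserving}, gives $O\bigl(k(k+1)\sqrt{W\Omega}\bigr)$ per pair $\{\sigma,\tau\}$. Over all $\binom q2$ pairs the total is $O\bigl(k(k+1)q(q-1)\sqrt{W\Omega}\bigr)$.

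\textbf{Step 3: finish the rounding.} One caveat in Step 1: moving along pair $\{\sigma,\tau\}$ only zeroes $\sigma$- or $\tau$-coordinates, so previously emptied $R$'s stay empty. Hence at the end every $|R_{\sigma\tau}|\le k$, and at most $k\binom q2$ blocks are non-integral. Round each remaining fractional block $B$ arbitrarily to any state in its support. A single block change alters each $\bar P_i$ by at most the total absolute weight of terms containing $B$, which is at most $D_B\le D$. The number of such blocks is at most $kq(q-1)/2$, giving an extra $O(kq(q-1)D)$, within the stated $2k(k+1)q(q-1)D$.

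\textbf{Main obstacle.} The delicate point is Step 2: ensuring the second- and third-order increments are controlled purely by $w$ on pairs inside $Q$, with a constant small enough to match $2k(k+1)q(q-1)$. Cubic terms with one block in $Q$ and two outside are linear in $t$ and vanish by Step 1. Terms with two or three blocks in $Q$ are charged to a pair in $Q$, and a term with three blocks in $Q$ is charged to several pairs, which only helps. The constants need care but follow the computation in Lemma~\ref{lem:graph-preserving}.
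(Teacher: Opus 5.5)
Your proposal is correct and follows essentially the same argument as the paper. For each state pair you choose $Q\in\binom{R_{\sigma\tau}}{k+1}$ minimizing $w(Q)$, kill the $k$ first-order terms with a nonzero $h$ orthogonal to the gradient-difference vectors, bound the remaining change by $2w(Q)$, average and sum using a split at $\sqrt{W/\Omega}$, and finally round the at most $k\binom q2$ leftover blocks at cost $D$ each.

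The constants you left as $O(\cdot)$ do fit the stated bound. Since $\sum_{r\ge k+1}\min\{W/(r(r-1)),\Omega\}\le 2\sqrt{W\Omega}$, each state pair costs at most $4k(k+1)\sqrt{W\Omega}$, hence at most $2k(k+1)q(q-1)\sqrt{W\Omega}$ over all $\binom q2$ pairs.
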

\begin{proof}[\bf Proof.]
The case $q=1$ is trivial, so we assume that $q\ge2$. Fix two distinct states $\sigma,\tau\in\Sigma$ and let
\[
 \mathcal R_{\sigma\tau}
 =\{B\in\mathcal B:y_{B,\sigma}>0\text{ and }y_{B,\tau}>0\}.
\]
While $r=|\mathcal R_{\sigma\tau}|\ge k+1$, choose
$\mathcal Q\in\binom{\mathcal R_{\sigma\tau}}{k+1}$ minimizing
$ w(\mathcal Q)=\sum_{BC\in \binom{\mathcal Q}{2}}w_{BC}$.
For every $i\in[k]$, form the vector $g_i\in\R^Q$ with coordinates
 $(g_i)_B=\frac{\partial\bar P_i}{\partial y_{B,\sigma}}
          -\frac{\partial\bar P_i}{\partial y_{B,\tau}}.$
The $k$ vectors $g_i$ lie in a $(k+1)$-dimensional space, so there is a
nonzero $h\in\R^{\mathcal Q}$ orthogonal to all of them.  Move along
\[
 y_{B,\sigma}(t)=
 \begin{cases}
    y_{B,\sigma}+th_B,&B\in \mathcal Q;\\
    y_{B,\sigma},&B\notin \mathcal Q,
 \end{cases}
 \qquad
 y_{B,\tau}(t)=
 \begin{cases}
    y_{B,\tau}-th_B,&B\in \mathcal Q;\\
    y_{B,\tau},&B\notin \mathcal Q
 \end{cases}
\]
to either nonzero endpoint of the maximal feasible interval.  Every
first-order change vanishes, and at least one nonzero coordinate becomes zero.
Let $\delta=(\delta_{B,\rho})_{B\in \mathcal B,\rho\in\Sigma}$ be the family such that 
\begin{align*}
\delta_{B,\rho}=
\begin{cases}
    th_B,&B\in \mathcal Q,\rho=\sigma;\\
    -th_B,&B\in \mathcal Q,\rho=\tau;\\
    0,&B\notin \mathcal Q\ \text{or}\ \rho\ne\sigma,\tau.
\end{cases}
\end{align*}
Since $\nabla\bar P_i(y)\cdot\delta\le0$, $|\delta_{B,\sigma}|\le1$ and $0\le y_{B,\sigma}\le1$ for every $B,\sigma$, we have that, for every $i\in[k]$,
\begin{align*}
    \Delta\bar P_i(y)&=\sum_{B\in \mathcal Q}\sum_{\rho\in\Sigma}\frac{\partial \bar P_i}{\partial y_{B,\rho}}\delta_{B,\rho}
    +\sum_{\{B,C\}\in\binom{\mathcal Q}{2}}\sum_{\rho_1,\rho_2\in\{\sigma,\tau\}}\frac{\partial^2 \bar P_i}{\partial y_{B,\rho_1}\partial y_{C,\rho_2}}\delta_{B,\rho_1}\delta_{C,\rho_2}\\
    &\hspace{25mm}+\sum_{\{B,C,D\}\in \binom{\mathcal Q}{3}}\sum_{\rho_1,\rho_2,\rho_3\in \{\sigma,\tau\}}C^{(i)}_{(B,\rho_1),(C,\rho_2),(D,\rho_3)}\delta_{B,\rho_1}\delta_{C,\rho_2}\delta_{D,\rho_3}\\
    &\le\sum_{\{B,C\}\in\binom{\mathcal Q}{2}}\sum_{\rho_1,\rho_2\in\{\sigma,\tau\}}\left|\frac{\partial^2 \bar P_i}{\partial y_{B,\rho_1}\partial y_{C,\rho_2}}\right|
    +\sum_{\{B,C,D\}\in \binom{\mathcal Q}{3}}\sum_{\rho_1,\rho_2,\rho_3\in \{\sigma,\tau\}}\left|C^{(i)}_{(B,\rho_1),(C,\rho_2),(D,\rho_3)}\right|\\
    &\le2w(\mathcal Q),
\end{align*}
where $\Delta\bar P_i(y)=\bar P_i(y+\delta)-\bar P_i(y)$ and
\begin{align*}
    \frac{\partial^2 \bar P_i}{\partial y_{B,\rho_1}\partial y_{C,\rho_2}}=C^{(i)}_{(B,\rho_1),(C,\rho_2)}+\sum_{D\ne B,C}\sum_{\rho_3\in\Sigma}C^{(i)}_{(B,\rho_1),(C,\rho_2),(D,\rho_3)}y_{D,\rho_3}.
\end{align*}
Averaging over the $(k+1)$-subsets of $R_{\sigma\tau}$ gives
\[
 w(\mathcal Q)
 \le \frac{k(k+1)}{r(r-1)}
      \sum_{BC\in\binom {R_{\sigma\tau}}{2}}w_{BC}
 \le k(k+1)\min\left\{\frac W{r(r-1)},\Omega\right\}.
\]
Every operation for the state pair $\{\sigma,\tau\}$ strictly decreases
$|\mathcal R_{\sigma\tau}|$, and supports never increase.  If $W\Omega>0$, splitting
at $\sqrt{W/\Omega}$ gives
\[
 \sum_{r=k+1}^{|\mathcal B|}\min\left\{\frac W{r(r-1)},\Omega\right\}
 \le2\sqrt{W\Omega},
\]
while when $W\Omega=0$ the sum is zero. Let $y'$ be the final fractional state assignment. Summing over $\binom {q}2$ state pairs gives, for every $i\in[k]$,
\begin{align*}
    \bar P_i(y')-\bar P_i(y)\le 2k(k+1)q(q-1)\sqrt{W\Omega}.
\end{align*}

For any fixed state pair $\sigma,\tau$, there are at most $k$ blocks $B$ such that $y'_{B,\sigma}\ne0$ and $y'_{B,\tau}\ne0$. Moreover, for any non-integral block $B$, there are at least two states $\sigma,\tau$ such that $y'_{B,\sigma}\ne0$ and $y'_{B,\tau}\ne0$.
So $y'$ has at most $k\binom {q}2$ blocks remaining non-integral.  Round those blocks arbitrarily, one at a time. Let $z$ be the integral state assignment after rounding. Since rounding one block changes any $\bar P_i(y')$ by at most $D$, we have that, for every $i\in[k]$,
\begin{align*}
    \bar P_i(z)\le\bar P_i(y')+k\frac {q^2}{2}D\le\bar P_i(y)+2k(k+1)q(q-1)\left(\sqrt{W\Omega}+D\right),
\end{align*}
which proves the lemma.
\end{proof}

Let $H$ be a 3-graph on a vertex set $V$. Next we may round all good vertices with Lemma~\ref{lem:signed-rounding} instead of probabilistic techniques.

\begin{corollary}\label{cor:residual-rounding}
Let $H$ be a $3$-graph with $m$ edges and let
$V(H)=A\cup U$.  Let
$A=A_1\cup\cdots\cup A_k$ be a fixed
$k$-coloring of $A$, and let $p=(p_1,\ldots,p_k)\in S_k$. Let $\cS=(A;m_0,\alpha,\beta,\Gamma)$ be the bad-set configuration defined above.
  Then there is a $k$-partition
$V(H)=V_1\cup\cdots\cup V_k$ with
$A_i\subseteq V_i$ and
\[
 e_H(V_i)\le K_i(\cS;p,A_i)
 +2k^5\left(\sqrt{3m\Delta_{2,H}(U)}+\Delta_H(U)\right)
\]
for every $i$.
\end{corollary}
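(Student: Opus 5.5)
The plan is to realize $e_H(V_i)$ as one of the polynomials $\bar P_i$ of Lemma~\ref{lem:signed-rounding}, evaluate it at the fractional assignment in which every good vertex is colored according to $p$, and then round. I read the configuration ``defined above'' as follows: $m_0=e_H(U)$, $\alpha_a$ is the number of edges meeting $A$ exactly in $a$, $\beta_{ab}$ is the number of edges meeting $A$ exactly in $\{a,b\}$, and $\Gamma=H[A]$.

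\textbf{Setup.} Take the block set $\mathcal B=U$, one block per good vertex, and the state set $\Sigma=[k]$, so $q=k$. For an assignment $y$ of $U$, combine it with the fixed coloring $A_1\cup\cdots\cup A_k$ of $A$ and define
\[
\bar P_i(y)=e_\Gamma(A_i)+\sum_{a\in A_i}\sum_{uv:\,auv\in E}y_{u,i}y_{v,i}+\sum_{ab\in\binom{A_i}{2}}\sum_{u:\,abu\in E}y_{u,i}+\sum_{uvw\in E(H[U])}y_{u,i}y_{v,i}y_{w,i}.
\]
This has exactly the shape required by Lemma~\ref{lem:signed-rounding}: a constant, linear terms, quadratic terms in distinct blocks, and cubic terms in distinct blocks. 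All coefficients are $0$ or $1$ and only involve state $i$. For an integral $z$, $\bar P_i(z)=e_H(V_i)$ with $V_i=A_i\cup\{u:z_{u,i}=1\}$.

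\textbf{Evaluation at $p$.} Put $y_u=p$ for every $u\in U$. Every edge with exactly $j$ vertices in $U$ then contributes $p_i^{j}$ if its $A$-part lies in $A_i$. Summing these contributions gives
\[
\bar P_i(y)=p_i^3m_0+p_i^2\alpha(A_i)+p_i\beta(A_i)+e_\Gamma(A_i)=K_i(\cS;p,A_i).
\]

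\textbf{Bounding the parameters.} For distinct good vertices $u,v$, the weight $w_{uv}$ collects one unit for each edge $auv$ with $a\in A$, which appears only for the color of $a$. It also collects one unit for each pair (edge $uvw\subseteq U$, color $i$), hence $k$ units per such edge. So $w_{uv}\le k\,d_H(u,v)\le k\Delta_{2,H}(U)$, giving $\Omega\le k\Delta_{2,H}(U)$. Summing over pairs, each edge is counted at most $3k$ times, so $W\le 3km$. Similarly, each edge through $u$ contributes at most $k$ to $D_u$, so $D\le k\Delta_H(U)$.

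\textbf{Conclusion.} Lemma~\ref{lem:signed-rounding} yields an integral $z$ with
\[
e_H(V_i)\le K_i(\cS;p,A_i)+2k(k+1)k(k-1)\bigl(k\sqrt{3m\Delta_{2,H}(U)}+k\Delta_H(U)\bigr).
\]
Since $2k^3(k^2-1)\le 2k^5$, this is the claimed bound, and $A_i\subseteq V_i$ holds by construction.

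The main obstacle is the bookkeeping of $w_{uv}$ and $D_u$ under the definitions in Lemma~\ref{lem:signed-rounding}: one has to check that each edge is counted only $O(k)$ times, and that the resulting powers of $k$ stay within $2k^5$. The multilinear evaluation at $p$ is routine.
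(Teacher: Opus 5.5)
Your proposal is correct and follows the paper's proof. The paper likewise takes $\bar P_i(y)=\sum_{abc\in E(H)}y_{a,i}y_{b,i}y_{c,i}$ with the $A$-coordinates frozen by the coloring $A_1\cup\cdots\cup A_k$, observes that this equals $K_i(\cS;p,A_i)$ at $y_u=p$, bounds $W\le 3km$, $\Omega\le k\Delta_{2,H}(U)$ and $D\le k\Delta_H(U)$, and applies Lemma~\ref{lem:signed-rounding} with $q=k$; your explicit expansion of $\bar P_i$ and edge-by-edge accounting of $w_{uv}$ and $D_u$ only spell out what the paper states in one line.
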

\begin{proof}[\bf Proof.]
For each $u\in U$, use the state set $\Sigma=[k]$ and variables
$y_{u,1},\ldots,y_{u,k}$ with $\sum_i y_{u,i}=1$.  For $a\in A$, regard
$y_{a,i}=1$ when $a\in A_i$ and $y_{a,i}=0$ otherwise. For every $i\in[k]$, define
\[
        \bar P_i(y)=\sum_{abc\in E(H)}y_{a,i}y_{b,i}y_{c,i},
\]
where only the coordinates indexed by $U$ are variables.  At the
assignment $y_{u,i}=p_i$ for all $u\in U$, classifying an edge by the number
of its vertices in $A$ gives $\bar P_i(y)=K_i(\cS;p,A_i)$.

Use the notation $w_{uv},W,\Omega,D$ from Lemma~\ref{lem:signed-rounding}
for $\bar P_1,\ldots,\bar P_k$. Note that an edge containing a
fixed pair $uv\in\binom U2$ contributes at most $k$ to $w_{uv}$, and an edge containing a fixed vertex $u\in U$ contributes at most $k$ to $D_u$.  Consequently
$W\le3km,\,\Omega\le k\Delta_{2,H}(U),\,D\le k\Delta_H(U).$
By Lemma~\ref{lem:signed-rounding} with $q=k$, there is an integral state
assignment $z$ such that, for every $i\in[k]$,
\[
 \bar P_i(z)\le K_i(\cS;p,A_i)
 +2k^5\left(\sqrt{3m\Delta_{2,H}(U)}+\Delta_H(U)\right).
\]
Set $V_i=A_i\cup\{u\in U:z_{u,i}=1\}.$ Then $\bar P_i(z)=e_H(V_i)$ for every $i$, which proves the corollary.
\end{proof}

 By combining with Lemma~\ref{lem:graph-preserving}, Lemma~\ref{lem:lower-rank-completion} and Corollary~\ref{cor:residual-rounding}, we can deal with the case for 3-graphs with small maximum codegree.

\begin{theorem}\label{thm:low codegree case}
Fix $k\ge2$.  Let $V(H)=A\cup U$ and let $H$ be a simple
$3$-graph with $m$ edges. 
  Then there is a $k$-partition
$V(H)=V_1\cup\cdots\cup V_k$ such that
\[
 \max_{i\in[k]}e_H(V_i)
 \le \frac m{k^3}
 +\left(\frac {k^2}{2}+12k\right)|A|^2+2k^5\left(\sqrt{3m\Delta_{2,H}(U)}+\Delta_H(U)\right).
\]
\end{theorem}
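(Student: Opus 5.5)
The plan is to chain the three tools in the order indicated just before the statement. First I fix the bad-set configuration $\cS=(A;m_0,\alpha,\beta,\Gamma)$ induced by the bipartition $V(H)=A\cup U$, the one implicitly used in Corollary~\ref{cor:residual-rounding}. Here $m_0=e_H(U)$, $\alpha_a$ is the number of edges meeting $A$ exactly in $a$, $\beta_{ab}$ is the number of edges meeting $A$ exactly in $\{a,b\}$, and $\Gamma=H[A]$. With this choice $M(\cS)=m$, and $K_i(\cS;p,x)$ is the expected number of edges in class $i$ when $A$ is coloured by $x$ and each good vertex independently receives colour $i$ with probability $p_i$. I start from $p^0=(1/k,\ldots,1/k)$ and the uniform fractional colouring $x_{a,i}=1/k$. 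Then $L_i(\cS;p^0,x)+G_i(\Gamma;x)=m/k^3$ exactly for every $i$.

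Next I apply Lemma~\ref{lem:graph-preserving} on $V=A$ with $F=\Gamma$, $w=\beta$, $\gamma_i=p_i^0$, $\ell_{a,i}=(p_i^0)^2\alpha_a$ and $c_i=(p^0_i)^3m_0$, so that $P_i=K_i(\cS;p^0,\cdot)$. This yields $x'$ with at most $k(k-1)$ non-integral vertices and
\[
K_i(\cS;p^0,x')\le m/k^3+12\sqrt6(k-1)\sqrt{e(\Gamma)\Delta_2(\Gamma)}.
\]
Since $e(\Gamma)\le\binom{|A|}{3}\le|A|^3/6$ and $\Delta_2(\Gamma)\le|A|$, the error is at most $12(k-1)|A|^2\le 12k|A|^2$.

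The next stage rounds the remaining $k(k-1)$ fractional bad vertices while retuning $p$ for $U$. I set $T_i=L_i(\cS;p^0,x')+\tau$ for a slack $\tau$ to be chosen, and apply Lemma~\ref{lem:lower-rank-completion} with the integral vertices of $x'$ kept fixed. This gives an integral $z$ (i.e.\ a colouring $A=A_1\cup\dots\cup A_k$) and $p\in S_k$ with $L_i(\cS;p,z)\le T_i$. The $\Gamma$-part is handled separately: changing the colours of the $\le k(k-1)$ fractional vertices changes $G_i(\Gamma;\cdot)$ by at most the number of edges of $\Gamma$ meeting those vertices. That count is at most $k(k-1)\binom{|A|}{2}\le \frac{k^2}{2}|A|^2$, so I may take $\tau=0$ and get
\[
K_i(\cS;p,A_i)\le K_i(\cS;p^0,x')+\tfrac{k^2}{2}|A|^2 .
\]
The point of Lemma~\ref{lem:lower-rank-completion} is that the lower-order part $L_i$, which can involve huge weights $\alpha,\beta$, costs nothing in the rounding, because $p$ absorbs it. Only the genuinely cubic part inside $A$ is paid for, and that is bounded by $|A|^2$ terms.

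Finally, Corollary~\ref{cor:residual-rounding} with this $(A_i)$ and $p$ extends the colouring to $U$ at the cost $2k^5(\sqrt{3m\Delta_{2,H}(U)}+\Delta_H(U))$, giving the stated bound. I expect the main obstacle to be the bookkeeping in the rounding step. One needs the hypotheses of Lemma~\ref{lem:lower-rank-completion} to be met with $T_i$ defined from $x'$ and $p^0$, and the change in $G_i$ under rounding to be bounded independently of $p$. It also has to be checked that the constant $\frac{k^2}{2}$ is not exceeded: each rounded vertex lies in at most $\binom{|A|-1}{2}$ edges of $\Gamma$, and its contribution to $G_i$ can only go up from $x'_{v,i}$ to $1$.
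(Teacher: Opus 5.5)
Your proposal is correct and follows the paper's proof essentially step for step: the same configuration $\cS$ with $M(\cS)=m$, then Lemma~\ref{lem:graph-preserving} from the uniform colouring at cost $12(k-1)|A|^2$, then Lemma~\ref{lem:lower-rank-completion} to round the last $k(k-1)$ bad vertices while retuning $p$, a charge of at most $\frac{k^2}{2}|A|^2$ for the change in $G_i(\Gamma;\cdot)$, and finally Corollary~\ref{cor:residual-rounding} on $U$. The only cosmetic difference is that you take $T_i=L_i(\cS;p^0,x')$, whereas the paper uses the larger threshold $T_i=m/k^3+12(k-1)|A|^2-G_i(\Gamma;x')$; both choices give the same final bound.
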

\begin{proof}[\bf Proof.]
Let $m_0=e_H(U)$ and $\Gamma=H[A]$.
For two distinct vertices $a,b\in A$, define
\begin{align*}
 \alpha_a=|\{\{a,u,v\}\in E(H):u,v\in U\}|,~\beta_{ab}=|\{\{a,b,u\}\in E(H):u\in U\}|.
\end{align*}
Let $\cS=(A;m_0,\alpha,\beta,\Gamma)$.  Then $M(\cS)=m$.
Set $p_i^0=1/k$ and $x_{a,i}=1/k$ for every $a\in A$ and $i\in[k]$.
Put $\Phi_i(x)=K_i(\cS;p^0,x).$ Then, for every $i\in[k]$,
\[
        \Phi_i(x)=\frac{M(\cS)}{k^3}=\frac m{k^3}.
\]
Moreover,
\[
 \Phi_i(x)=\frac{m_0}{k^3}
 +\frac1{k^2}\sum_{a\in A}\alpha_ax_{a,i}
 +\frac1k\sum_{ab\in\binom A2}\beta_{ab}x_{a,i}x_{b,i}
 +\sum_{abc\in E(\Gamma)}x_{a,i}x_{b,i}x_{c,i}.
\]
Thus applying Lemma~\ref{lem:graph-preserving} with the common weighted graph
$\beta$, $\gamma_i=1/k$ and $F=\Gamma$ gives a fractional coloring
$x'$ with at most $k(k-1)$ non-integral vertices such that, for every $i\in[k]$,
\begin{equation}\label{eq:finite-partial-m1}
        K_i(\cS;p^0,x')=\Phi_i(x')\le\frac m{k^3}+12(k-1)|A|^2,
\end{equation}
where the inequality holds by $e(F)\le\binom{|A|}{3}$ and $\Delta_2(F)\le|A|$.
For every $i\in[k]$, put
\[
        T_i=\frac m{k^3}+12(k-1)|A|^2-G_i(\Gamma;x').
\]
By \eqref{eq:finite-partial-m1}, we have $T_i\ge0$ and
$L_i(\cS;p^0,x')\le T_i$.
Lemma~\ref{lem:lower-rank-completion} therefore yields an integral
coloring $z$, which keeps every integral vertex of $x'$ fixed, and a
vector $p\in S_k$ such that, for every $i\in[k]$, $L_i(\cS;p,z)\le T_i$.
Note that only edges of the simple $3$-graph $\Gamma$
incident with one of the final $k(k-1)$ vertices can change their contribution to $G_i(\Gamma;x')$, and the number of such edges is at most
\[
        k(k-1)\binom{|A|-1}{2}\le\frac{k^2}{2}|A|^2.
\]
Let $A_i=\{a\in A:z_{a,i}=1\}$. Then, for every $i\in[k]$,
\[
        K_i(\cS;p,A_i)=L_i(\cS;p,z)+G_i(\Gamma;z)\le\frac m{k^3}+\left(\frac {k^2}{2}+12k\right)|A|^2.
\]
Applying Corollary~\ref{cor:residual-rounding} to the fixed coloring
$A=A_1\cup\cdots\cup A_k$ gives a $k$-partition of $V(H)$ with
\[
 e_H(V_i)\le\frac m{k^3}
 +\left(\frac {k^2}{2}+12k\right)|A|^2
 +2k^5\left(\sqrt{3m\Delta_{2,H}(U)}+\Delta_H(U)\right)
\]
for $i\in[k]$, as required.
\end{proof}

\section{Hypergraphs with large maximum codegree}

In this section, we consider the case for general 3-graphs. 

If there are many distinct pairs of vertices with large codegree in a 3-uniform hypergraph, we can force the two vertices in each pair to lie in different parts. This ensures that no edge containing such a pair is contained entirely in a single part, and hence we may delete all such edges.

Let $A$ and $P$ be disjoint sets, and let $M=\{u_iv_i:1\le i\le t\}$ be a matching whose endpoint set is $P$.  Write $B_i=\{u_i,v_i\}$. 
We construct a bad-set configuration $\cS=(A\cup P;m_0,\alpha,\beta,\Gamma)$, where $m_0$ is a constant, $\alpha,\beta$ are two nonnegative functions and $\Gamma$ is a simple 3-graph on $A\cup P$.  
For distinct $z,w\in A\cup P$, put 
\[
 D_1(z)=\alpha_z+\sum_{w\ne z}\beta_{zw}+d_\Gamma(z),
 \qquad
 D_2(zw)=\beta_{zw}+d_\Gamma(zw).
\]
For each $B_i$, we assume that $D_2(B_i)=0$, which means that, for $1\le i\le t$,
\begin{equation}
 \beta_{u_iv_i}=0
 \quad\text{and}\quad
 \{e\in E(\Gamma):B_i\subseteq e\}=\varnothing.
 \label{eq:matched-atoms-deleted}
\end{equation}
Thus for any subset $S$ of $A\cup P$ of size at most 3, the coefficient of $\prod_{a\in S}x_a$ is positive only if $|S\cap B_i|\le1$ for each $i$.

\begin{theorem}\label{thm:forced-interface}
Fix $k\ge2$.   Let
$N\ge1$ and $\varrho,\Upsilon,\Lambda\ge1$.  Suppose
$\cS=(A\cup P;m_0,\alpha,\beta,\Gamma)$ defined above satisfies $|A\cup P|\le\varrho N$ and for $z\in P$, $D_1(z)\le\Upsilon N^2$. For $z,w\in P$, suppose that 
$D_2(zw)\le\Lambda N$ if $z,w$ belong to distinct blocks, and $D_2(zw)=0$ if $z,w$ belong to the same block. Then there are a $k$-coloring $A\cup P=A_1\cup\cdots\cup A_k$ in which the vertices of every $B_i$ receive distinct colors, and a vector
$p\in S_k$, such that, for $i\in[k]$,
\[
 K_i(\mathcal S;p,A_i)\le \frac{M(\cS)}{k^3}
 +(12k+k^2)\varrho^2N^2 +4k^{10}\left(\sqrt{3M(\cS)\Lambda N}+\Upsilon N^2\right).
\]
\end{theorem}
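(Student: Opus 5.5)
The plan is to repeat the three-stage scheme of Theorem~\ref{thm:low codegree case}, with the vertices of $P$ now playing the role of the good vertices, but grouped into blocks. First treat $A$ as in that proof. Start from $p^0=(1/k,\dots,1/k)$ and the uniform fractional coloring on $A\cup P$. Then $K_i(\cS;p^0,x)=M(\cS)/k^3$ for every $i$, provided that on each block $B_j$ we use a \emph{rainbow} fractional assignment: a uniform distribution over the $k(k-1)$ ordered pairs of distinct colors.

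This causes no loss at the start. By \eqref{eq:matched-atoms-deleted}, no monomial contains both $x_{u_j}$ and $x_{v_j}$. Hence the expectation of each monomial depends only on the marginals, which are $1/k$ for each vertex, and the rainbow constraint is invisible at first order.

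Apply Lemma~\ref{lem:graph-preserving} to the coordinates indexed by $A$ only, with $P$ frozen at its uniform marginals. The $P$-contributions are then absorbed into the coefficients $c_i,\ell_{u,i}$ and into the weights $\gamma_iw$. To keep the setting of that lemma, I would fold the $P$-terms together with $\beta,\Gamma$ restricted to $A$, and control the $\Gamma$ part by $e(F)\Delta_2(F)\le |A\cup P|^3\cdot|A\cup P|/6$. This costs at most $12k\varrho^2N^2$. Next round the at most $k(k-1)$ remaining fractional vertices of $A$ using Lemma~\ref{lem:lower-rank-completion}, which also produces the vector $p$. Losing the $\Gamma$-edges at those vertices costs at most $\frac{k^2}{2}\varrho^2N^2$. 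Together these two steps account for the term $(12k+k^2)\varrho^2N^2$.

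The subtle point is that Lemma~\ref{lem:lower-rank-completion} changes $p$, and $p$ multiplies only the $m_0,\alpha,\beta$ terms. So I would include the blocks of $P$ inside the bad-set configuration being rounded, with $P$ held at its uniform fractional values. The $T_i$ then absorb the $P$-contributions evaluated at those fractional values, and the $L_i$ bound survives the change of $p$ because the $P$-variables are fixed.

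Second, round $P$ using Lemma~\ref{lem:signed-rounding}. Take the blocks to be $\mathcal B=\{B_1,\dots,B_t\}$ and the state set $\Sigma$ to be the set of ordered pairs $(c,c')$ with $c\ne c'$, so $q=k(k-1)\le k^2$. The current fractional state assignment is uniform on each block, and $\bar P_i(y)=K_i(\cS;p,\cdot)$ is viewed as a polynomial of degree at most $3$ in the block variables, with $A$ and $p$ fixed. A monomial in $x_{z,i}$ with $z\in B_j$ becomes linear in $y_{B_j,\cdot}$. Since a monomial never contains two vertices of one block, $\bar P_i$ has the multilinear-in-blocks form required by the lemma.

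Every integral state assignment is rainbow on each block, which gives the required distinct colors. It remains to bound the parameters. For $W$, each unit of $\beta$ or $\Gamma$ weight on a pair $zw$ with $z,w\in P$ contributes at most $kq^2$ to $w_{B_jB_l}$, so $W\le O(k q^2)M(\cS)$ and $\Omega\le O(kq^2)\Lambda N$, using $D_2(zw)\le\Lambda N$ summed over the $4$ cross pairs. For $D$, we get $D\le O(kq)\Upsilon N^2$ using $D_1\le\Upsilon N^2$ on both vertices of a block.

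The main obstacle is the constant bookkeeping, that is, checking that $2k(k+1)q(q-1)$ times these factors fits within $4k^{10}$. With $q\le k^2$ one gets $2k(k+1)k^4\cdot$ (weight factors in $k$), and the absolute values of the coefficients are at most $1$ because $p_i\le1$. I expect a careful count, noting that each monomial involves only a few states per block, to reach $4k^{10}$; if it does not, I would tighten the count by summing over $\sigma,\tau$ only on the states consistent with the colors involved.
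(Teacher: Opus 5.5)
\textbf{Your proof is correct, and it reaches the bound by a slightly different route from the paper.} You use the same three tools: Lemma~\ref{lem:graph-preserving} on $A$ with $P$ frozen at uniform marginals, Lemma~\ref{lem:signed-rounding} on the blocks with the $k(k-1)$ rainbow states, and Lemma~\ref{lem:lower-rank-completion} to finish $A$ and produce $p$. The difference is that you swap the order of the last two steps.

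\textbf{The paper's order.} The paper rounds the blocks first, still at $p^0=(1/k,\dots,1/k)$, and only then calls Lemma~\ref{lem:lower-rank-completion} with $T_i=L_i(\cS;p^0,\xi')$. At that point every vertex except the at most $k(k-1)$ leftover vertices of $A$ is integral. So the lemma applies verbatim, via its clause that integral vertices may be kept fixed.

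\textbf{What your order requires.} You must run Lemma~\ref{lem:lower-rank-completion} while $P$ is still fractional. As stated, that lemma outputs a fully integral coloring, and rounding the vertices of $P$ one at a time would break the rainbow constraint. You therefore need a partial version of it, which its proof supplies. The proof rounds one vertex at a time, and the convexity step (Lemma~\ref{lem:capacity-convex}) only needs $a_{i,0},c_{i,0},c_{i,1}\ge0$, which holds for arbitrary fractional values of the other coordinates. So you can stop after the leftover vertices of $A$ and still have $\Psi\ge1$.

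Your order also forces Lemma~\ref{lem:signed-rounding} to run at a general $p$. This is harmless, since $p_i\le1$ and none of the parameter bounds actually use the factor $1/k$. Neither order buys anything substantive; your $G_i$-loss $\frac{k^2}{2}\varrho^2N^2$ is marginally smaller than the paper's $k(k-1)\varrho^2N^2$.

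\textbf{The constant bookkeeping.} Your provisional counts do not reach $4k^{10}$.
\begin{itemize}
\item Charging $kq^2$ per unit of pair weight leads to order $k^{11}$.
\item Your $D=O(kq)\Upsilon N^2$ is actually too small. A $\Gamma$-edge through a block vertex whose other two vertices lie in two further blocks contributes $k(k-1)^3$ to $D_B$.
\end{itemize}
The refinement you anticipated is exactly what the paper does. Each $\xi_{z,i}$ is a sum of exactly $k-1$ state variables, so a degree-$d$ term expands into $(k-1)^d$ state monomials. Summing over $i$ gives the uniform factor $k(k-1)^3\le k^4$. This yields
\[
\Omega\le4k^4\Lambda N,\qquad W\le3k^4M(\cS),\qquad D\le2k^4\Upsilon N^2.
\]
With $2k(k+1)q(q-1)\le2k^6$ for $q=k(k-1)$, this gives exactly $4k^{10}\bigl(\sqrt{3M(\cS)\Lambda N}+\Upsilon N^2\bigr)$.
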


\begin{proof}[\bf Proof.]
Let $\Sigma=\{(j,l)\in[k]^2:j\ne l\}$, and $q=|\Sigma|=k(k-1)$.
For a block $B_r=\{u_r,v_r\}$, the state $(j,l)$ assigns color $j$ to
$u_r$ and color $l$ to $v_r$.  Let $y_{r,(j,l)}$ be a fractional
distribution on these states.  Define the color indicators
\[
 \xi_{u_r,i}=\sum_{l\ne i}y_{r,(i,l)},
 \qquad
 \xi_{v_r,i}=\sum_{j\ne i}y_{r,(j,i)}.
\]
For $a\in A$, put $\xi_{a,i}=x_{a,i}$. 

At $p_i^0=1/k$, define
\begin{align*}
 \Phi_i(x,y)
 =\frac{m_0}{k^3}
 +\frac1{k^2}\sum_{a\in A\cup P}\alpha_a\xi_{a,i}
 +\frac1k\sum_{ab\in\binom{A\cup P}{2}}\beta_{ab}\xi_{a,i}\xi_{b,i}
 +\sum_{abc\in E(\Gamma)}\xi_{a,i}\xi_{b,i}\xi_{c,i}.
\end{align*}
Then \eqref{eq:matched-atoms-deleted} implies that every term of $\Phi_i(x,y)$ meets each
block in at most one vertex.  Set
$x^0_{a,i}=1/k$ for $a\in A$ and $y^0_{r,\sigma}=\frac{1}{k(k-1)}$ for $r\in[t]$ and $\sigma\in\Sigma$. At $x^0,y^0$, we have $\Phi_i(x^0,y^0)=M(\cS)/k^3$ for every $i\in[k]$.

We first color most vertices of $A$ except at most $k(k-1)$ vertices.
 Let $P_i(x)\coloneqq \Phi_i(x,y^0)$ be a function of the variables
$\{x_{a,i}: a\in A\}$, and $F=\Gamma[A]$.  For a pair
$\{a,b\}\in\binom A2$, the coefficient in every color is
\[
        w_{ab}=\frac1k\beta_{ab}
        +\frac1k|\{z\in P:abz\in E(\Gamma)\}|,
\]
which is a nonnegative weighted graph. Then  Lemma~\ref{lem:graph-preserving} gives a fractional $k$-coloring $x'$ in which at most $k(k-1)$ vertices of $A$ are non-integral,
such that, for every $i\in[k]$,
\[
        E_{1,i}\coloneqq P_i(x')-P_i(x^0)\le 12(k-1)|A|^2\le 12(k-1)\varrho^2N^2,
\]
where the first inequality holds by $e(F)\le\binom{|A|}{3}$ and $\Delta_2(F)\le|A|$.

Next we color each block of $P$.
Fix $x'$ and put $\bar P_i(y)\coloneqq \Phi_i(x',y)$. We now apply
Lemma~\ref{lem:signed-rounding} to the block variables $y_{r,\sigma}$. 
For two distinct blocks $B_r,B_s$, the only way their variables can interact in $\Phi_i$ is through a weighted graph term $\beta_{uv}$, or through a 3-edge $uvw\in E(\Gamma)$.
Moreover, note that each $\xi_{z,i}$ is a sum of at most $k-1$ state variables $y_{r,\sigma}$.  Therefore
\[
 w_{rs}
 \le k\sum_{z\in B_r}\sum_{w\in B_s}
        \left(\frac{1}{k}\beta_{zw}(k-1)^2+d_\Gamma(zw)(k-1)^3\right)
 \le 4k^4\Lambda N.
\]
Further, we have
\begin{align*}    
        &W \le k\sum_{1\le r<s\le t}\sum_{z\in B_r}\sum_{w\in B_s}
        \left(\frac{1}{k}\beta_{zw}(k-1)^2+d_\Gamma(zw)(k-1)^3\right)\le 3k^4M(\cS)
\end{align*}
and $\Omega\le 4k^4\Lambda N$.
Similarly, the sum of coefficients of $y_{r,\sigma}$ over all $\sigma$ with $r$ fixed in $\Phi_i$ is at most
\[
        k(k-1)^3\bigl(D_1(u_r)+D_1(v_r)\bigr)
        \le 2k^4\Upsilon N^2,
\]
so $D\le 2k^4\Upsilon N^2$.  Lemma~\ref{lem:signed-rounding} with $q=k(k-1)$ gives an integral state assignment $y'$ such that, for every $i\in[k]$,
\[
        E_{2,i}\coloneqq \bar P_i(y')-\bar P_i(y^0)\le 4k^{10}\left(\sqrt{3M(\cS)\Lambda N}+\Upsilon N^2\right).
\]
Let $\xi'$ be the
fractional $k$-coloring of $A\cup P$ obtained from $x'$ and $y'$. Then, for every $i\in[k]$,
\[
        K_i(\cS;p^0,\xi')\le\frac{M(\cS)}{k^3}+12k\varrho^2N^2 +4k^{10}\left(\sqrt{3M(\cS)\Lambda N}+\Upsilon N^2\right),
\]
and at most $k(k-1)$ vertices, all in $A$, are non-integral.

Finally, we color the remaining vertices of $A\cup P$.
For every $i\in[k]$, put
\[
        T_i=\frac{M(\cS)}{k^3}+E_{1,i}+E_{2,i}-G_i(\Gamma;\xi')= L_i(\cS;p^0,\xi').
\]
Apply Lemma~\ref{lem:lower-rank-completion} to the configuration $\cS$, keeping every integral vertex fixed. There are an integral coloring $z$ of $A\cup P$ and $p\in S_k$ such that for each $i\in[k]$, $L_i(\cS;p,z)\le L_i(\cS;p^0,\xi')$.
It colors the final at most $k(k-1)$ remaining vertices of $A$ without increasing any $L_i(\cS;p^0,\xi')$. Now we consider how $G_i(\Gamma;\xi')$ changes for each $i$. 
Note that only edges incident with one of the final $k(k-1)$ vertices can increase $G_i(\Gamma;\xi')$.
Since $\Gamma$ is simple and $|A\cup P|\le\varrho N$, the total number of such edges is at most
$k(k-1)\varrho^2N^2$.  Let $A_i=\{u\in A\cup P:z_{u,i}=1\}.$
Then, for every $i\in[k]$,
\begin{align*}
 K_i(\cS;p,A_i)
 &\le T_i+G_i(\Gamma;z)\le\frac{M(\cS)}{k^3}
 +E_{1,i} +E_{2,i}+(G_i(\Gamma;z)-G_i(\Gamma;\xi'))\\
 &\le \frac{M(\cS)}{k^3}
 +(12k+k^2)\varrho^2N^2 +4k^{10}\left(\sqrt{3M(\cS)\Lambda N}+\Upsilon N^2\right).
\end{align*}
Moreover, since every block $B_r=\{u_r,v_r\}$ is in a state $(a,b)$ with $a\ne b$, $u_r,v_r$ must receive
distinct colors.
\end{proof}

\begin{lemma}\label{lem:matching-absorption}
For every fixed $k\ge2$ there is a constant $c_k>0$ such that the following
holds.  Let $\varrho,N,\lambda\ge1$, let $H$ be a $3$-graph on $V$ with $m\le N^3$ edges. Suppose $V=A\cup U$, where $|A|\le\varrho N,
        \Delta_H(U)\le N^2,
        \Delta_{2,H}(U)\le2\lambda N$.
If the graph on $U$ formed by pairs of codegree greater than $\lambda N$
contains a matching of size at least
$ \left\lceil\frac{c_kN}{\sqrt\lambda}\right\rceil,$
then $H$ has a $k$-partition
$V=V_1\cup\cdots\cup V_k$ in which, for every $i\in[k]$,
\[
        e_H(V_i)\le\frac m{k^3}+O(k^{10}(\varrho+k^{13})^2N^2).
\]
\end{lemma}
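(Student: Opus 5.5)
The plan is to take a matching $M=\{u_iv_i:1\le i\le t\}$ of high-codegree pairs in $U$, with $t=\lceil c_kN/\sqrt\lambda\rceil$ (discarding extra edges if the matching is larger), and to let $P=V(M)$. We then force each pair into distinct parts via Theorem~\ref{thm:forced-interface}. The gain is that the edges containing some $B_i$ are never monochromatic, so we may delete them. Each pair has codegree $>\lambda N$, and an edge contains at most one matched pair, since $B_i\cup B_j$ has four vertices. So the deleted number is $m_{\rm del}>t\lambda N\ge c_k\sqrt\lambda N^2$. This is a saving of $m_{\rm del}/k^3$ in the main term, and it will absorb the $\sqrt{\lambda}$-type errors coming from codegrees up to $2\lambda N$ among the good vertices.

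\textbf{Step 1: the configuration.} Let $H'$ be $H$ with all edges containing some $B_i$ deleted, so $m'=m-m_{\rm del}$. Build $\cS=(A\cup P;m_0,\alpha,\beta,\Gamma)$ from $H'$ with bad set $A\cup P$ and good set $U'=U\setminus P$, exactly as in Theorem~\ref{thm:low codegree case}. Then $M(\cS)=m'$, and \eqref{eq:matched-atoms-deleted} holds by construction.

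\textbf{Step 2: checking the hypotheses of Theorem~\ref{thm:forced-interface}.} We have $|A\cup P|\le\varrho N+2t\le(\varrho+2c_k)N$, so we take $\varrho'=\varrho+2c_k$. For $z\in P\subseteq U$ we have $D_1(z)\le d_H(z)\le N^2$, so $\Upsilon=1$. For $z,w\in P$ in distinct blocks, $D_2(zw)\le d_H(z,w)\le 2\lambda N$, so $\Lambda=2\lambda$.

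\textbf{Step 3: coloring and rounding.} Theorem~\ref{thm:forced-interface} gives a coloring $A_1\cup\dots\cup A_k$ of $A\cup P$ with distinct colors on each block, and a vector $p$, such that
\[
K_i(\cS;p,A_i)\le \frac{m'}{k^3}+(12k+k^2)\varrho'^2N^2+4k^{10}\bigl(\sqrt{6m\lambda N}+N^2\bigr).
\]
Apply Corollary~\ref{cor:residual-rounding} to $H'$ with bad set $A\cup P$ and good set $U'$. Here $\Delta_{H'}(U')\le N^2$ and $\Delta_{2,H'}(U')\le 2\lambda N$, so this adds $2k^5\bigl(\sqrt{6m\lambda N}+N^2\bigr)$.

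\textbf{Step 4: returning to $H$.} In the resulting partition no deleted edge is monochromatic, so $e_H(V_i)=e_{H'}(V_i)$. Using $m\le N^3$, we get $\sqrt{6m\lambda N}\le\sqrt6\sqrt\lambda N^2$. Hence
\[
e_H(V_i)\le \frac m{k^3}-\frac{c_k\sqrt\lambda N^2}{k^3}+C k^{10}\sqrt\lambda N^2+O\bigl(k^{10}(\varrho+c_k)^2N^2\bigr).
\]
Choose $c_k$ of order $k^{13}$, say $c_k=Ck^{13}$ with $C\ge 4\sqrt6+2\sqrt6+1$. Then the $\sqrt\lambda$ terms cancel, and what remains is $O(k^{10}(\varrho+k^{13})^2N^2)$, as claimed.

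The main obstacle is the bookkeeping that all $\sqrt{\lambda}$-dependence is genuinely linear in $\sqrt\lambda N^2$, with constants independent of $\lambda$ and $\varrho$, so that one choice of $c_k$ dominates it. A second point is the coloring of $P$ in Theorem~\ref{thm:forced-interface}. There the bound $|P|\le 2c_kN$ holds only because we truncated the matching, which is what makes $\varrho'$ contribute $(\varrho+k^{13})^2$ rather than something depending on $\lambda$. If the cancellation fails by constants, the fallback is to take $c_k$ larger, since the matching size hypothesis allows any fixed $c_k$.
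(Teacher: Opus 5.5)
Your proposal is correct and follows the paper's proof essentially step for step. You truncate the matching to size $t=\lceil c_kN/\sqrt\lambda\rceil$, delete the edges through matched pairs (more than $c_k\sqrt\lambda N^2$ of them), apply Theorem~\ref{thm:forced-interface} with $\Upsilon=1$, $\Lambda=2\lambda$ and then Corollary~\ref{cor:residual-rounding}, and take $c_k=\Theta(k^{13})$ so the saving $c_k\sqrt\lambda N^2/k^3$ absorbs every $\sqrt\lambda N^2$ error. The only slip is that the ceiling gives $|A\cup P|\le(\varrho+2c_k+2)N$ rather than $(\varrho+2c_k)N$, which is harmless.
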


\begin{proof}[\bf Proof.]
Choose a matching $M$ of size $ t=\left\lceil\frac{c_kN}{\sqrt\lambda}\right\rceil,$  
where $c_k$ is a constant we choose later. 
Let $P=V(M)$ and $R=U\setminus P$, and let $\cK$ be the family of edges
containing some $B_r$ of $M$.  Since a $3$-edge cannot contain two
disjoint pairs, these edge families are disjoint and
\begin{equation}\label{eq:killed-mass}
 |\cK|=\sum_{uv\in M}d_H(u,v)
 >t\lambda N\ge c_k\sqrt\lambda\,N^2.
\end{equation}
We next apply Theorem~\ref{thm:forced-interface} and
Corollary~\ref{cor:residual-rounding} to obtain a $k$-partition of
$H$ in which the two vertices of every pair in $M$ lie in distinct
parts and each part contains sufficiently few edges.

Now delete $\cK$ from $H$ and put $H'=H\setminus\cK$.  Form the bad-set
configuration $\cS=(A\cup P;m_0,\alpha,\beta,\Gamma)$ with $m_0=e_{H'}(R)$, $\Gamma=H'[A\cup P]$ and two nonnegative functions
\begin{align*}
\alpha_a=|\{\{a,u,v\}\in E(H'):u,v\in R\}|,
 \qquad
 \beta_{ab}=|\{\{a,b,u\}\in E(H'):u\in R\}|.
\end{align*}
Then
\[
        M(\cS)=e(H')=m-|\mathcal K|\le N^3,
        \qquad
        |A\cup P|\le(\varrho+2c_k+2)N.
\]
For $z\in P$, we have $ D_1(z)=d_{H'}(z)\le d_H(z)\le N^2.$
If $z,w$ lie in two distinct pairs of $M$, then
\[
        D_2(zw)=d_{H'}(zw)\le d_H(zw)\le2\lambda N.
\]
Since we have deleted $\mathcal K$, $D_2(B_i)=0$ for every block $B_i$ of $M$. Since $M(\mathcal S)\le N^3$, Theorem~\ref{thm:forced-interface} with $\Upsilon=1$ and
$\Lambda=2\lambda$ gives a $k$-coloring $A\cup P=A_1\cup\cdots\cup A_k$ and $p\in S_k$ such that, for every $i\in[k]$,
\[
 K_i(\cS;p,A_i)
 \le\frac{m-|\mathcal K|}{k^3}+C_1(\varrho+2c_k+2)^2N^2+C_1\sqrt{2\lambda}\,N^2,
\]
where $C_1=4\sqrt6k^{10}$.

Next we construct the $k$-partition of $H$ corresponding to the $k$-coloring
$A\cup P=A_1\cup\cdots\cup A_k$. Note that, for distinct two vertices $r,s\in R$, $d_{H'}(r)\le N^2$, $d_{H'}(rs)\le2\lambda N$.
Corollary~\ref{cor:residual-rounding} extends the coloring of $A\cup P$ to
a $k$-partition $V=V_1\cup\cdots\cup V_k$ of $H'$ such that, for every $i\in[k]$,
\begin{align*}
 e_{H'}(V_i)
     &\le K_i(\cS;p,A_i)
 +C_2\left(\sqrt{M(\mathcal S)2\lambda N}+N^2\right)\\
     &\le K_i(\cS;p,A_i)
 +C_2\sqrt{2\lambda }N^2+C_2N^2,    
\end{align*}
where $C_2=2\sqrt3k^5$.
By \eqref{eq:killed-mass}, we have that, for every $i\in[k]$, 
\begin{align*}
     e_{H'}(V_i)
     &\le \frac{m-|\mathcal K|}{k^3}+C_{1}(\varrho+2c_k+2)^2N^2+C_1\sqrt{2\lambda}\,N^2+C_2\sqrt{2\lambda }N^2+C_2N^2\\
     &\le \frac{m}{k^3}+C_{1}(\varrho+2c_k+2)^2N^2+C_2N^2+\left(C_1\sqrt2+C_2\sqrt2-\frac{c_k}{k^3}\right)\sqrt\lambda\,N^2.
\end{align*}
Choose $c_k=( C_1+C_2+1)\sqrt2k^3=O(k^{13})$. Then we have that, for every $i\in[k]$,
\begin{align*}
     e_{H'}(V_i)
    \le \frac{m}{k^3}+O(k^{10}(\varrho+k^{13})^2N^2).
\end{align*}

Finally, every deleted edge in $\cK$ contains one of the bichromatic pairs of
$M$, so it is not contained in any part of the final $k$-partition.  Hence
$e_H(V_i)=e_{H'}(V_i)$ for every $i$, and the required bound follows.
\end{proof}

\section{Proof of the main theorem}
Now we are ready to prove the main theorem.
\begin{proof}[\bf Proof of Theorem~\ref{thm:main}]
The argument is trivial for $m=0$.  Put $N=m^{1/3}\ge1$ and define
\[
        A_0=\{v\in V(H):d_H(v)>N^2\},
        \qquad
        U_0=V(H)\setminus A_0.
\]
Since $\sum_vd_H(v)=3m=3N^3$, we have $|A_0|\le3N$.  Every vertex of $U_0$
has degree at most $N^2$, and hence every pair in $U_0$ has codegree at most
$N^2$.

Let $\lambda_j=2^j$ for $j=0,1,\ldots,J$, where $J$ is the least integer with
$\lambda_J\ge N$.  Let $c_k$ be the constant from
Lemma~\ref{lem:matching-absorption}.
We proceed from level $j=J$ down to stage $j=0$.
At some stage $j\ne0$, let $A$ be the current bad set and $U=V(H)\setminus A$ be the current good set. 
 Let $G_j$ be the graph on $U$ whose edges are the pairs of codegree greater than
$\lambda_{j-1}N$ in $H$, and let $M_j$ be a maximal matching in $G_j$. We check whether $|M_j|\ge \left\lceil c_kN/\sqrt{\lambda_{j-1}}\right\rceil$. If so, we stop. Otherwise, we move all vertices of $M_j$ to $A$ and proceed to stage $j-1$. 
This implies that, at every stage, the bad set $A$ satisfies
\begin{equation*}
 |A|
 \le3N+2c_kN\sum_{j=1}^{J}2^{-(j-1)/2}
 \le\varrho N,
\end{equation*}
where we put $\varrho=3+\frac{2c_k}{1-2^{-1/2}}=O(k^{13})$.
Moreover, since the vertices of a maximal matching form a vertex cover of $G_j$, every pair left in $U$ has codegree at most $\lambda_{j-1}N$.
Together with the fact that $d_H(u,v)\le N^2\le \lambda_{J}N$ for distinct $u,v\in U$ at the first stage $j=J$, this implies that at every stage $j$ we have $d_H(u,v)\le\lambda_{j}N$ for distinct $u,v\in U$.

If we stop at some stage $j\ne0$, that is, $|M_j|\ge \left\lceil c_kN/\sqrt{\lambda_{j-1}}\right\rceil$ at stage $j$, then applying Lemma~\ref{lem:matching-absorption} with $\lambda=\lambda_{j-1}$ gives a $k$-partition $V=V_1\cup\ldots\cup V_k$ such that for every $i\in[k]$,
\begin{align*}
    e_H(V_i)\le\frac m{k^3}+O(k^{10}(\varrho+k^{13})^2N^2)\le\frac m{k^3}+O_k(m^{2/3}).
\end{align*}
So we may suppose that we stop at stage $j=0$. Then the good set $U$ satisfies
$\Delta_H(U)\le N^2,
        \Delta_{2,H}(U)\le N$.
Applying Theorem~\ref{thm:low codegree case} gives a $k$-partition such that for every $i\in[k]$,
\begin{align*}
 e_H(V_i)
 &\le\frac m{k^3}
 +\left(\frac {k^2}2+12k\right)|A|^2+2k^5(\sqrt{3mN}+N^2)\\
 &\le\frac m{k^3}+O_k(N^2)
 =\frac m{k^3}+O_k(m^{2/3}).
\end{align*}
This proves the theorem.
\end{proof}

\section*{Declaration on the Use of AI}

The authors used ChatGPT 5.5 to assist in discussing proof strategies, checking proofs, and improving exposition.

\end{document}